
\documentclass[11pt]{amsart}

\usepackage{amsfonts,amsmath,amssymb,amsthm,enumerate}
\usepackage[latin1]{inputenc}
\usepackage{hyperref}
\usepackage{tikz}
\usetikzlibrary{calc}
\usepackage[all]{xy}
\usepackage{float}


\newcommand{\Rr}{\mathbb{R}}

\newcommand{\Zz}{\mathbb{Z}}

\renewcommand {\le}{\leqslant}
\renewcommand {\ge}{\geqslant}

\newcommand{\defi}[1]{\emph{#1}}
\newcommand{\lk}{\mathop{\mathrm{lk}}\nolimits} 
\newcommand{\Conf}{\mathop{\mathrm{Conf}}\nolimits} 
\newcommand{\Aut}{\mathop{\mathrm{Aut}}\nolimits} 
\newcommand{\Aff}{\mathop{\mathrm{Aff}}\nolimits} 
\newcommand{\conjug}[1]{\bar{#1}} 
\newcommand{\id}{\mathop{\mathrm{id}}\nolimits} 
\renewcommand{\Im}{\mathop{\mathrm{Im}}\nolimits} 
\newcommand{\Ker}{\mathop{\mathrm{Ker}}\nolimits}

{\theoremstyle{plain}
\newtheorem{theorem}{Theorem}[section]    
\newtheorem*{theorem*}{Theorem}

\newtheorem{lemma}[theorem]{Lemma}       
\newtheorem{proposition}[theorem]{Proposition}      
\newtheorem{proposition*}{Proposition} 
\newtheorem{corollary}[theorem]{Corollary}      

}
{\theoremstyle{remark}

\newtheorem*{remark*}{Remark}  
\newtheorem{remark}[theorem]{Remark}   

\newtheorem*{question*}{Question}
}

\setlength{\parindent}{0pt}
\floatplacement{figure}{H}
 
\title{The braid group of a necklace}

\date{\today}

\author{Paolo Bellingeri}
\email{paolo.bellingeri@unicaen.fr}

\author{Arnaud Bodin}
\email{Arnaud.Bodin@math.univ-lille1.fr}

\address{Laboratoire Nicolas Oresme,
 Universit\'e de Caen, 14032 Caen, France}  

\address{Laboratoire Paul Painlev\'e, Math\'ematiques, Universit\'e 
Lille 1, 59655 Villeneuve d'Ascq, France}

\subjclass[2010]{20F36 (20F65, 57M25)}

\keywords{Configuration spaces; braid groups; affine braid groups}

\begin{document}

\begin{abstract}
We show several geometric and algebraic aspects of a \emph{necklace}:
a link composed with a core circle and a series of (unlinked) circles linked to this core.
We first prove that the fundamental group of the configuration space of necklaces 
(that we will call  braid group of a necklace) is 
isomorphic to  the braid group over an annulus quotiented by the square of the center.
We then define   braid groups of necklaces and  affine braid groups of type $\mathcal{A}$
in terms  of  automorphisms of  free groups and characterize these automorphisms 
among all automorphisms of free groups.  In the case of affine braid groups of type $\mathcal{A}$
such a representation is faithful.
\end{abstract}

\maketitle

\section{Introduction}

The braid group $B_n$ can be defined as the fundamental group of the configuration space  
of $n$ distinct points on a disk.
By extension, when we replace the disk by another surface $\Sigma$, we define the \emph{braid group on $n$ strands over $\Sigma$}
as the fundamental group of the configuration space of $n$ distinct points on $\Sigma$.
A particular case is when $\Sigma$ is the annulus: the braid group of the annulus,
denoted by $CB_n$ (for \emph{circular braid group}) is the fundamental group of the configuration 
space $\mathcal{CB}_n$ of $n$ distinct points over an annulus.
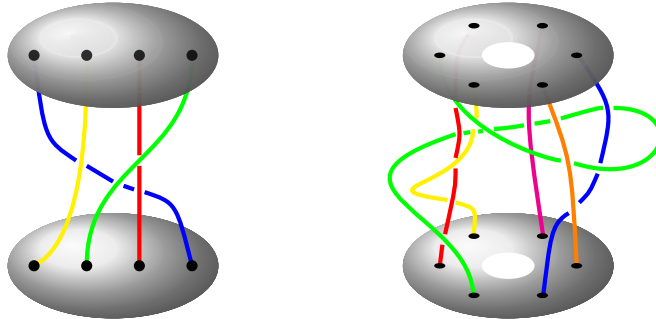
\begin{figure}
 \begin{tikzpicture}[scale=0.7]
\fill [ball color=gray!20] (0,0) ellipse (2 and 1);

\draw[ultra thick,yellow] (-1.5,0) .. controls (-0.5,0.5)  and (-0.5,3) .. (-0.5,4);

\draw[ultra thick,green] (-0.5,0) .. controls (-0.5,2)  and (1.5,2) .. (1.5,4);

\draw[ultra thick,red] (0.5,0)--(0.5,1.9);
\draw[ultra thick,red] (0.5,2.1)--(0.5,4);

\draw[ultra thick,blue] (1.5,0) .. controls (1.2,1.2)   ..(0.6,1.4);
\draw[ultra thick,blue] (0.4,1.45)--(0.2,1.52);
\draw[ultra thick,blue] (0,1.6)--(-0.5,1.9);
\draw[ultra thick,blue] (-0.7,2).. controls (-1.3,2.4)   ..(-1.5,4);

\fill [ball color=gray!20,opacity=0.6] (0,4) ellipse (2 and 1);

\foreach \i in{-1.5,-0.5,0.5,1.5}{
  \fill[black,opacity=1]({\i},0) circle(3pt);
  \fill[black,opacity=0.8]({\i},4) circle(3pt);
};

\end{tikzpicture} \qquad
 \begin{tikzpicture}[scale=0.7]
\fill [ball color=gray!20] (0,0) ellipse (2 and 1);
\fill [white] (0,0) ellipse (0.5 and 0.25);

\draw[ultra thick,orange] (0:1.3)  .. controls (1.2,2).. (0.66,3.4);

\draw[ultra thick,blue] (0.66,-0.6).. controls (0.8,0.5)  and (0.8,0.8) ..(1.15,1);
\draw[ultra thick,blue] (1.35,1.20).. controls (1.55,1.4)..(1.7,1.75);
\draw[ultra thick,blue] (1.75,1.95).. controls (2,3)   ..(1.33,4);

\draw[ultra thick,magenta] (0.66,0.6) .. controls (0.3,3)  .. (0.66,4.6);

\draw[ultra thick,yellow] (-0.66,0.6) .. controls (-0.6,1.1)  .. (-1,1.2);
\draw[ultra thick,yellow] (-1.15,1.25) .. controls (-2.9,1.5) and (-0.7,1.5) .. (-0.66,2.6);
\draw[ultra thick,yellow] (-0.6,2.8) .. controls (-0.6,3)  .. (-0.66,3.4);

\draw[ultra thick,green] (-0.66,-0.6) .. controls (-0.66,1)  and (-4,1.5)  .. (-1,2.5);
\draw[ultra thick,green] (-0.85,2.52) -- (-0.72,2.54);
\draw[ultra thick,green] (-0.57,2.55) --(-0.48,2.56);
\draw[ultra thick,green] (-0.27,2.6).. controls (0,2.62) .. (0.3,2.67);
\draw[ultra thick,green] (0.5,2.7) -- (0.8,2.75);
\draw[ultra thick,green] (1,2.8) .. controls (1.7,3) .. (1.8,3);
\draw[ultra thick,green] (2,3) .. controls (3.5,3) and (3,1.4) .. (1.25,1.95);
\draw[ultra thick,green] (1,2) .. controls (0.8,2)  and (-1.3,2.6)  .. (-1.3,4);

\draw[ultra thick,red] (-1.3,0) -- (-1.25,0.4);
\draw[ultra thick,red] (-1.2,0.6) .. controls (-1,1.5) .. (-1,1.85);
\draw[ultra thick,red] (-1,2.05) .. controls (-0.9,2.5) .. (-1,3);
\draw[ultra thick,red] (-1,3.2) .. controls (-1,4)  .. (-0.66,4.6);

\fill [ball color=gray!20,opacity=0.6] (0,4) ellipse (2 and 1);
\fill [white] (0,4) ellipse (0.5 and 0.25);

\begin{scope}[yscale=0.5]
\foreach \i in{0,60,...,300}{
  \fill[black,opacity=1]({\i}:1.3) circle(3pt);
  \fill[black,opacity=1] (0,8)+({\i}:1.3) circle(3pt);
};
\end{scope}

\end{tikzpicture}
 \caption{A braid with $4$ strands over a disk (left). 
 A braid with $6$ strands over an annulus (right).}
\end{figure}


There is a 3-dimensional analogue of $B_n$: it is the fundamental group of all configurations of $n$
unlinked Euclidean circles. Following  \cite{BH} we will denote by $\mathcal{R}_n$ the space of configurations of $n$
unlinked Euclidean circles and by $R_n$ its fundamental group (called \emph{group of rings} in \cite{BH}).
The group $R_n$  is generated by $3$ types of moves (see figure \ref{fig:moves}).
\begin{figure}
 \begin{tikzpicture}[scale=0.6]

 \draw[ultra thick]  (0:1) arc (0:360:1);

\begin{scope}[xshift=3cm]
  \draw[ultra thick]  (-165:1) arc (-165:0:1);
  \draw[ultra thick]  (-180:1) arc (-180:-345:1);
\end{scope}

\draw[->,>=latex,ultra thick,blue] (1.2,0.) .. controls (2,-0.5)  and (2.5,0.35) .. (5,0.05);

 \node at (0,-1.5) {$i$}; 
 \node at (3,-1.5) {$i+1$}; 
\end{tikzpicture} \qquad
 \begin{tikzpicture}[scale=0.6]

 \draw[ultra thick]  (0:1) arc (0:360:1);

\begin{scope}[xshift=3cm]
  \draw[ultra thick]  (-190:1) arc (-190:155:1);
\end{scope}

\draw[ultra thick,blue] (1.2,0.1) .. controls (2,0.8)  and (2.5,-0.3) .. (3.8,0.1);
\draw[->,>=latex,ultra thick,blue] (4.15,0.15) .. controls (4.3,0.25) .. (5,0.45);

 \node at (0,-1.5) {$i$}; 
 \node at (3,-1.5) {$i+1$}; 
\end{tikzpicture} \qquad
 \begin{tikzpicture}[scale=0.6]
\draw[gray] (0,-1.5)--(0,2);

 \draw[ultra thick]  (0:1) arc (0:360:1);
\draw[->,ultra thick,blue]  (0.4,2.5) arc (90:-90:0.8 and 0.4) ;

 \node at (0.5,-1.5) {$i$}; 
 \node at (1.9,2.1) {$180^ \circ$};
\end{tikzpicture}
 \caption{The move $\rho_i$ (left).
 The move $\sigma_i$ (center).
 The move $\tau_i$ (right).
 \label{fig:moves}}
\end{figure}
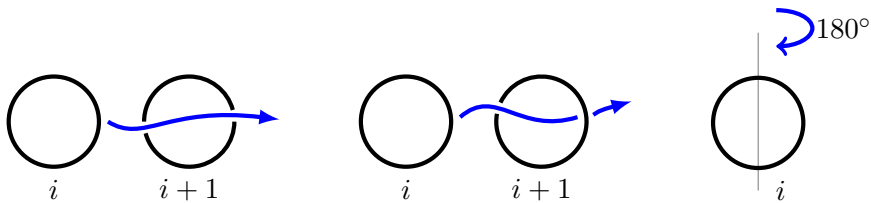

The move $\rho_i$ is the path permuting the $i$-th and the $i+1$-th circles by passing over (or around)
while $\sigma_i$ permutes them by passing the $i$-th circle through the $i+1$-th and $\tau_i$ is a $180^\circ$ rotation of the circle back to itself, 
which satisfies $\tau_i^2 = \id$ (let us remark that  our notation is opposite to the one of  \cite{BH}, where  $\rho_i$ was denoted by $\sigma_i$ and $\sigma_i$ by $\rho_i$; here we change the notation for the simple reason that 
 $\sigma_i$'s generate a subgroup isomorphic to $B_n$).
To avoid the last move $\tau_i$   one can define
$\mathcal{UR}_n$ as the configuration of $n$ unlinked Euclidean circles being all parallel to a fixed plane, say the $yz$-plane
(\emph{untwisted rings}).
The fundamental group of this configuration space is denoted in \cite{BH} by $UR_n$ but we will denote it by $WB_n$
and we shall call it  the  \defi{welded braid group}, since this is the most usual name for  this group  which appears in the literature in other  different contexts such as
motion groups (\cite{G1},\cite{G2}), ribbon tubes (\cite{ABMW})
automorphisms of free groups (\cite{BP,FRR}) 
or collections of paths (\cite{FRR}).

This article is devoted to the relationship between   configuration spaces of points 
over an annulus and configuration spaces of Euclidean circles. We will consider a special type of link, 
composed of one core and $n$ Euclidean circles linked to this core, the $n$ circles being the boundary of disks
that intersect the core circle orthogonally at the center of the disks.  We will call 
such a link a \defi{necklace} on $n$ components and  we will denote by $\mathcal{L}_n$. 
We will introduce the configuration space of  necklaces  and we will construct a representation
of its fundamental group  as a subgroup of automorphisms of free groups.
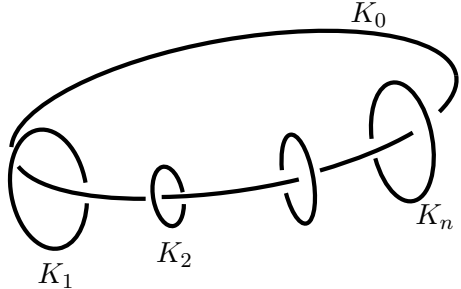
\begin{figure}
 \begin{tikzpicture}[scale=1]

\begin{scope}[rotate=10]


\draw[ultra thick] (3,0) arc (0:175:3 and 1);
\draw[ultra thick] (3,0) arc (0:-26:3 and 1);
\draw[ultra thick] (2.3,-0.65) arc (-42:-72:3 and 1) ;
\draw[ultra thick] (0.7,-1) arc (-80:-116:3 and 1) ;
\draw[ultra thick]  (-1.35,-0.9) arc (-117:-170:3 and 1) ;


 \node at (2,1) {$K_0$}; 

 \node at (-2.7,-1.7) {$K_1$}; 
\draw[ultra thick] (-2.1,-0.55) arc (0:340:0.5 and 0.8);

 \node at (-1.1,-1.7) {$K_2$}; 
\draw[ultra thick] (-0.85,-0.90) arc (3:340:0.2 and 0.4);

\draw[ultra thick] (0.5,-1.15) arc (-165:165:0.2 and 0.6);

 \node at (2.4,-1.8) {$K_n$}; 
\draw[ultra thick] (1.75,-0.9) arc (-169:172:0.4 and 0.8);

\end{scope}

\end{tikzpicture} 
 \caption{The necklace $\mathcal{L}_n$.\label{fig:necklace}}
\end{figure}

More precisely, our first result (Theorem \ref{th:circular}) is that the fundamental group of the configuration space of $n$-component necklaces (that we will call the  \emph{braid group of a necklace})
is isomorphic to 
the fundamental group of the configuration space of $n$ points 
over an annulus (which is $CB_n$)  quotiented by the square of its center.  

A theorem of Artin characterizes automorphisms of a free group coming 
from the action of the standard braid group. In our case, to a loop $\mathcal{L}_n(t)$ of necklaces we associate
an automorphism from $\pi_1(\Rr^3\setminus\mathcal{L}_n)$ to itself.
Our second result, Theorem \ref{th:circularartin},  is an analogue of Artin's theorem for the braid group of a necklace.

In section \ref{sec:zero}, we define affine braid groups of type $\mathcal{A}$ in terms of configurations of Euclidean circles
and we refine the representation given  in Theorem \ref{th:circularartin} to obtain
 a faithful representation and a characterization as automorphisms of free groups for affine braid groups of type $\mathcal{A}$ (Theorem \ref{thm:affine}):
 this is the third main result of the paper.
In section \ref{ssec:linearnecklace} we show how to define the braid group $B_n$
 in terms of configurations of Euclidean circles and we give a short survey on some remarkable (pure) subgroups
 of $WB_n$; finally  in  Section \ref{2.1proof} we determine  the kernel of a particular representation of $CB_n$ in $\Aut F_n$, proving this way the statement of Theorem
 \ref{thmCB}, which plays a key role in the proof of  Theorem \ref{thm:affine}.

\section{Necklaces and circular braids}
\label{sec:necklace}


\subsection{The circular braid group}

Recall that the circular braid group $CB_n$ is the fundamental group
of $n$ distinct points in the plane less the origin (i.e.~topologically an annulus).
The circular braid group $CB_n$
admits the following  presentation (where the indices are defined modulo $n$, see for instance \cite{KP}):
$$CB_n = \left\langle \sigma_1,\ldots,\sigma_n, \zeta \mid
\begin{array}{l}
\sigma_i\sigma_{i+1}\sigma_i = \sigma_{i+1}\sigma_i\sigma_{i+1} \quad \text{ for } i = 1,2\ldots, n, \\
\sigma_i\sigma_j=\sigma_j\sigma_i \quad \text{ for } |i-j| \neq 1, \\
\conjug{\zeta} \sigma_i \zeta =\sigma_{i+1} \quad \text{ for } i = 1,2\ldots, n \\
\end{array}
\right\rangle.
$$
where $\conjug{\zeta}$ stands for $\zeta^{-1}$.
Geometrically $\sigma_i$ permutes the $i$-th and $i+1$-th point and $\zeta$ is cyclic permutation
of the points.

\begin{figure}
 \begin{tikzpicture}[scale=0.7]
\fill [ball color=gray!20] (0,0) ellipse (2 and 1);
\fill [white] (0,0) ellipse (0.5 and 0.25);

\draw[ultra thick,blue]  (-0.66,-0.6)  .. controls (0.5,1.5).. (0.66,3.4);

\draw[ultra thick,red] (0.66,-0.6) .. controls (0.6,0.5)  .. (0.25,0.8);
\draw[ultra thick,red] (0.05,1) .. controls (-0.5,1.5)  .. (-0.66,3.4);

\draw[ultra thick,green] (-1.33,0).. controls (-1.8,1)  and (-1.8,3)   ..(-1.33,4);

\draw[ultra thick,green] (1.33,0).. controls (1.8,1)  and (1.8,3)   ..(1.33,4);

\draw[ultra thick,green] (0.66,0.6) .. controls (1,2.5)  .. (0.66,4.6);

\draw[ultra thick,green] (-0.66,0.6)  .. controls (-1,2.5)  .. (-0.66,4.6);

\fill [ball color=gray!20,opacity=0.6] (0,4) ellipse (2 and 1);
\fill [white] (0,4) ellipse (0.5 and 0.25);

\begin{scope}[yscale=0.5]
\foreach \i in{0,60,...,300}{
  \fill[black,opacity=1]({\i}:1.3) circle(3pt);
  \fill[black,opacity=1] (0,8)+({\i}:1.3) circle(3pt);
};
\end{scope}

\end{tikzpicture} \qquad\qquad
 \begin{tikzpicture}[scale=0.7]
\fill [ball color=gray!20] (0,0) ellipse (2 and 1);
\fill [white] (0,0) ellipse (0.5 and 0.25);

\draw[ultra thick,blue]  (0.66,-0.6)  -- (-0.66,3.4);

\draw[ultra thick,blue]  (-0.66,-0.6)  -- (-1.33,4);

\draw[ultra thick,blue]  (1.33,0)  -- (0.66,3.4);

\draw[ultra thick,blue]  (0.66,0.6)  -- (0.9,1.8);
\draw[ultra thick,blue]  (1,2.1)  -- (1.33,4);

\draw[ultra thick,blue]  (-0.64,0.6)  -- (-0.25,1.8);
\draw[ultra thick,blue]  (-0.1,2.1)  -- (0.66,4.6);

\draw[ultra thick,blue]  (-1.33,0)  -- (-1.1,1.8);
\draw[ultra thick,blue]  (-1,2.25)  -- (-0.66,4.6);

\fill [ball color=gray!20,opacity=0.6] (0,4) ellipse (2 and 1);
\fill [white] (0,4) ellipse (0.5 and 0.25);

\begin{scope}[yscale=0.5]
\foreach \i in{0,60,...,300}{
  \fill[black,opacity=1]({\i}:1.3) circle(3pt);
  \fill[black,opacity=1] (0,8)+({\i}:1.3) circle(3pt);
};
\end{scope}

\end{tikzpicture}
 \caption{A move $\sigma_i$ (left). 
 The move $\zeta$ (right).}
\end{figure}
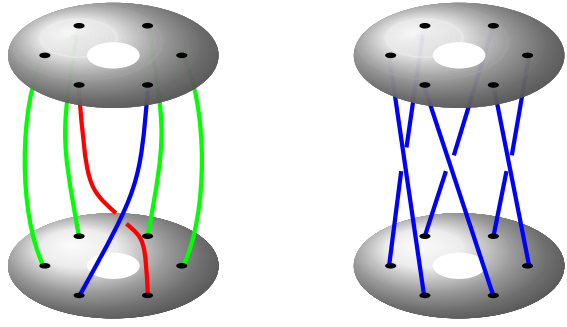

This is close to a presentation of the classical braid group, with two major differences:
(a) the indices are defined modulo $n$; (b) there are additional relations $\conjug{\zeta} \sigma_i \zeta =\sigma_{i+1}$.
In fact these latter relations make it possible to generate $CB_n$
with only two generators: $\sigma_1$ and $\zeta$.
We will consider the representation $\rho_{CB}:  CB_n \to \Aut F_n$   defined as follows: 
$$\rho_{CB}(\sigma_i) : 
\left\{\begin{array}{l}
x_i \mapsto x_{i} x_{i+1} \conjug{x_{i}} \\
x_{i+1} \mapsto x_i \\
x_{j} \mapsto x_j \quad j \neq i,i+1\\
\end{array}\right.
\qquad 
\rho_{CB}(\zeta) : 
\left\{\begin{array}{l}
x_j \mapsto x_{j+1} \\      
\end{array}\right.$$
where indices are modulo $n$.
The following Theorem will be proved in the last section and will play a key role in next sections:
we postpone the proof since it needs some reminders on Artin representation of $B_n$ in $\Aut F_n$
and since the techniques are slightly different from the rest of the paper.

\begin{theorem} \label{thmCB}
The kernel of $\rho_{CB}:  CB_n \to \Aut F_n$ is the cyclic group generated by $\zeta^n$.
\end{theorem}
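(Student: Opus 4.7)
The plan is to prove both inclusions of $\ker\rho_{CB}=\langle\zeta^n\rangle$ separately.

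For the easy direction $\langle\zeta^n\rangle\subseteq\ker\rho_{CB}$, I would compute directly $\rho_{CB}(\zeta^n)(x_j)=x_{j+n}=x_j$ with indices taken modulo $n$. Iterating the relation $\conjug{\zeta}\sigma_i\zeta=\sigma_{i+1}$ gives $\conjug{\zeta^n}\sigma_i\zeta^n=\sigma_i$, so $\zeta^n$ is central in $CB_n$ and $\rho_{CB}$ factors through the quotient $CB_n/\langle\zeta^n\rangle$. It is then enough to establish the injectivity of the induced map on this quotient.

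For the reverse direction, my plan is to lift $\rho_{CB}$ to the classical faithful representation $\widetilde\rho\colon CB_n\to\Aut F_{n+1}$ coming from the annular braid action on $F_{n+1}=\pi_1(A\setminus\{p_1,\ldots,p_n\})=\langle x_0,x_1,\ldots,x_n\rangle$, where $x_0$ is a loop around the central hole of the annulus $A$. In $\widetilde\rho$ the generators $\sigma_i$ with $i<n$ act by the usual Artin formulas fixing $x_0$; $\sigma_n$ acts on $x_1$ and $x_n$ by the $\sigma_1$-type rule twisted by a conjugation involving $x_0$; and $\zeta$ sends $x_i\mapsto x_{i+1}$ for $i<n$, $x_n\mapsto x_0^{-1}x_1x_0$, $x_0\mapsto x_0$. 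Quotienting $F_{n+1}$ by the normal closure of $x_0$ recovers $\rho_{CB}$, and an explicit iteration shows that $\widetilde\rho(\zeta^n)$ is the inner automorphism of $F_{n+1}$ given by conjugation with $x_0^{-1}$.

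Given $\beta\in\ker\rho_{CB}$, the faithfulness of $\widetilde\rho$ reduces the task to identifying inside $\widetilde\rho(CB_n)$ the automorphisms that descend to the identity on $F_n$. I would write $\beta=w\zeta^k$ using the semi-direct decomposition $CB_n=A\rtimes\langle\zeta\rangle$, where $A=\langle\sigma_1,\ldots,\sigma_n\rangle$ is the affine braid group of type $\widetilde{A}_{n-1}$ (normal in $CB_n$ by the conjugation relations, with the splitting coming from the well-defined map $CB_n\to\Zz$ sending $\sigma_i\mapsto 0$ and $\zeta\mapsto 1$). A Magnus- or Burau-type invariant extracted from the derived module $F_{n+1}'/F_{n+1}''$ (viewed as a module over $\Zz[F_{n+1}^{\mathrm{ab}}]$) should distinguish $\widetilde\rho(\zeta^k)$ from every element of $\widetilde\rho(A)$ for $k\not\equiv 0\pmod n$, pinning $k$ to a multiple of $n$. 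The residual equation $\rho_{CB}(w)=\id$ then forces $w=1$ by the known faithfulness of the Artin-type representation of $A$ on $F_n$, giving $\beta=\zeta^{nm}\in\langle\zeta^n\rangle$.

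The main obstacle is making precise the invariant that distinguishes $\widetilde\rho(\zeta^k)$ from $\widetilde\rho(A)$: on the abelianisation of $F_n$ both can realise the same cyclic permutation of the basis, so separation requires a finer invariant. The natural candidate is the induced action on $F_{n+1}'/F_{n+1}''$, where each $\widetilde\rho(\sigma_i)$ introduces $x_0$-factors only in compensating pairs, whereas $\widetilde\rho(\zeta)$ carries a non-compensated $x_0^{-1}$ at the wrap-around $x_n\mapsto x_0^{-1}x_1x_0$ that should survive as a non-trivial contribution in the module invariant.
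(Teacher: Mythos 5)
Your lifting step is sound as far as it goes: $CB_n$ does embed in $B_{n+1}$ as the annular (mixed) braid group, the restriction $\widetilde\rho$ of Artin's representation to it is faithful, the normal closure $\langle\langle x_0\rangle\rangle$ is invariant so that the quotient action recovers $\rho_{CB}$, and your computation that $\widetilde\rho(\zeta^n)$ is conjugation by $x_0$ is correct. But the proof fails at its crux: your last step quotes ``the known faithfulness of the Artin-type representation of $A\cong\tilde{A}_{n-1}$ on $F_n$'' to conclude $w=1$ from $\rho_{CB}(w)=\id$. That faithfulness is not a citable classical fact --- it is precisely part (i) of Theorem \ref{thm:affine} of this paper, which is \emph{proved from} Theorem \ref{thmCB} via Lemma \ref{lem:dahm}. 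So your argument is circular relative to the paper, and no independent proof is offered. Note that faithfulness of $\widetilde\rho$ on $F_{n+1}$ gives you nothing here by itself: the whole content of the theorem is the kernel of the induced action on $F_{n+1}/\langle\langle x_0\rangle\rangle$, i.e.\ exactly which automorphisms in $\widetilde\rho(CB_n)$ descend to the identity, and you never classify these. The paper avoids this trap by a different decomposition: via the Crisp--Paris isomorphism $\Theta_n:CB_n\to B_n\ltimes_{\rho_A}F_n$ (\cite{CrP}), the map $\rho_{CB}$ becomes the Artin action on the $B_n$-factor combined with the conjugation action of the $F_n$-factor; an element $\alpha\eta$ of the kernel then forces $\rho_A(\alpha)$ to be inner, hence $\alpha$ central in $B_n$ (using only the classical faithfulness of Artin's representation of $B_n$ together with Remark 1 of \cite{BB}), after which $\eta$ is determined and the element collapses to a power of $\Theta_n(\zeta^n)$.

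The other gap is the one you flag yourself: the separating invariant pinning $k\equiv 0\pmod n$ is only conjectural (``should distinguish'', ``should survive''), so as written this half is a hope, not a proof. This half, at least, is repairable far more cheaply than by a Magnus/Burau analysis of $F_{n+1}'/F_{n+1}''$. Every $\phi\in\rho_{CB}(CB_n)$ satisfies $\phi(x_1\cdots x_n)=w\,(x_1\cdots x_n)\,\conjug{w}$, and since the centralizer of $x_1\cdots x_n$ in $F_n$ is the cyclic group it generates, $w$ is unique up to powers of $x_1\cdots x_n$; hence the algebraic length $\ell(w)\in\Zz/n\Zz$ is a well-defined invariant of the automorphism $\phi$, and it is multiplicative because every $\phi$ satisfying condition (\ref{eq:conj1}) preserves algebraic length. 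This invariant vanishes on each $\rho_{CB}(\sigma_i)$ (for $i=n$ one gets $w=x_n\conjug{x_1}$, of length $0$) and is $\equiv \pm k \not\equiv 0 \pmod n$ on $\rho_{CB}(\zeta^{k})$ for $0<k<n$, which pins $k$ as required --- this is exactly the $\epsilon$ of Proposition \ref{prop:angmom}. But even with this repair, your scheme has merely reduced Theorem \ref{thmCB} to the injectivity of $\rho_{CB}$ on $\tilde{A}_{n-1}$, which is the statement you cannot cite; to close the argument you would have to import something like the paper's semidirect-product computation, or genuinely carry out the unperformed classification of the elements of $\widetilde\rho(CB_n)$ acting trivially modulo $\langle\langle x_0\rangle\rangle$.
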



\subsection{Necklaces}

A link  $\mathcal{L}_n = K_0 \cup K_1 \cup  \ldots\cup K_n$  is called a \defi{necklace} if:
\begin{itemize}
  \item $K_0$ is an Euclidean circle of center $O$ and of radius $1$,
  
  \item each Euclidean circle $K_i$, $i>0$, has a center $O_i$ belonging to $K_0$,
  the circle $K_i$ being of radius $r_i$ with $0 < r_i < \frac12$
  and belonging to the plane containing the line $(OO_i)$ and perpendicular to 
  the plane of $K_0$,
  
  \item if $O_i=O_j$ then $r_i \neq r_j$.
\end{itemize}

Here is an example of necklace:
\begin{figure}
 \begin{tikzpicture}[scale=1]

\begin{scope}[rotate=-10]


\draw[ultra thick] (3,0) arc (0:175:3 and 1);
\draw[ultra thick] (3,0) arc (0:-26:3 and 1);
\draw[ultra thick] (2.3,-0.65) arc (-42:-72:3 and 1) ;
\draw[ultra thick] (0.7,-1) arc (-80:-116:3 and 1) ;
\draw[ultra thick]  (-1.35,-0.9) arc (-117:-170:3 and 1) ;


 \node at (-2.5,1) {$K_0$}; 
 \fill[blue] (0,0) circle (2pt);
 \node[above] at (0,0) {$O$};

 \node at (-2.7,-1.7) {$K_1$}; 
\draw (0,0) --(-2.5,-0.57);
\fill[blue](-2.5,-0.57) circle (2pt);
\draw[ultra thick] (-2.1,-0.55) arc (0:340:0.5 and 0.8);

 \node at (-1.1,-1.7) {$K_2$}; 
\draw (0,0) -- (-1,-0.95);
\fill[blue] (-1,-0.95) circle (2pt);
\draw[ultra thick] (-0.85,-0.90) arc (3:340:0.2 and 0.4);

\draw (0,0) -- (0.6,-1.0) ;
\fill[blue] (0.6,-1) circle (2pt);
\draw[ultra thick] (0.5,-1.15) arc (-165:165:0.2 and 0.6);

 \node at (2.4,-1.8) {$K_n$}; 
\draw (0,0) -- (2.1,-0.73);
 \fill[blue] (2.1,-0.73) circle (2pt);
\draw[ultra thick] (1.75,-0.9) arc (-169:172:0.4 and 0.8);

\end{scope}

\end{tikzpicture} 
 \caption{The necklace $\mathcal{L}_n$.
 \label{fig:necklacebis}}
\end{figure}
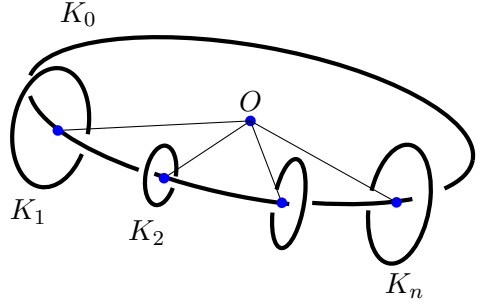

We will suppose $n\ge2$ and that the link is oriented. In particular each $K_i$ is a trivial knot such that:
\begin{equation*}
\left\{\begin{array}{ll}
\lk(K_0,K_i)=+1  &\quad i=1,\ldots,n \\
\lk(K_i,K_j)=0   &\quad i,j = 1,\ldots,n, \ \ i\neq j \\
\end{array}\right.
\end{equation*}

\subsection{The iteration $\tau^n$}


We denote by $\Conf \mathcal{L}_n$ the configuration space of \emph{unlabeled} necklaces
(i.e. two necklaces $K_0 \cup K_1 \cup  \ldots\cup K_n$ and  $K'_0 \cup K'_1 \cup  \ldots\cup K'_n$
are identified if they differ only by a permutation on the indices).  Since $\Conf \mathcal{L}_n$
is path-connected, we can  denote without ambiguity its fundamental group by  $\pi_1(\Conf \mathcal{L}_n)$.
Let $\tau \in \pi_1(\Conf \mathcal{L}_n)$ denote the circular permutation of 
circles $K_1 \to K_2$, $K_2 \to K_3$,\ldots such that any centre point $O_i$ is sent to  $O_{i+1}$ and any radius $r_i$ to $r_{i+1}$ (mod $n$).
It is important to understand the iteration $\tau^n$. 
We give two visions of $\tau^n$ in $\pi_1(\Conf \mathcal{L}_n)$.

 First, this move can of course be seen as $n$ iterations of $\tau$: so that it is a 
full rotation of all the $K_i$ ($i=1,\ldots,n$) along $K_0$, back to their initial position.
  
  On the other hand $\tau^n$ can be also seen as follows: suppose that $K_1,\ldots,K_n$ are 
  sufficiently closed circles. Fixing those $K_i$, $\tau^n$ corresponds to a full rotation of $K_0$
around all the $K_i$ (as $K_1,\ldots,K_n$ are closed we abusively suppose that their centres are aligned). 

Indeed each of these moves is a rotation of angle $2\pi$ around an axis, 
and we can continuously change the axis to go from the first vision to the second.

\begin{figure}
 \begin{tikzpicture}[scale=0.8]

\begin{scope}[rotate=10]
\draw[ultra thick] (3,0) arc (0:180:3 and 1);

\draw[gray,thick] (0,0)--+(0.1,0.05)--+(-0.1,-0.05);
\draw[gray,thick] (0,0)--+(-0.1,0.05)--+(0.1,-0.05);
\draw[gray,thick] (0,-0)--(0,3);

\draw[ultra thick] (3,0) arc (0:-30:3 and 1);
\draw[ultra thick] (3,0) arc (0:-30:3 and 1); 
\draw[ultra thick] (2.3,-0.65) arc (-40:-52:3 and 1) ;
\draw[ultra thick] (1.6,-0.85) arc (-59:-68:3 and 1) ;
\draw[ultra thick] (-3,0) arc (-180:-75:3 and 1) ;

\draw[ultra thick] (0.5,-1.2) arc (-165:165:0.3 and 0.6);
\draw[ultra thick] (1.2,-1.1) arc (-165:165:0.3 and 0.6);
\draw[ultra thick] (1.9,-0.9) arc (-165:165:0.3 and 0.6);

\draw[->,>=latex,ultra thick,blue] (2.5,0.2)  arc (-40:305:3 and 1);

\end{scope}

\end{tikzpicture}\qquad
 \begin{tikzpicture}[scale=0.8]

\begin{scope}[rotate=10]
\draw[ultra thick] (3,0) arc (0:180:3 and 1);
\draw[gray,thick] (-2.5,-1.7)--(3.65,-0.5);

\draw[gray,thick] (3.65,-0.5)--+(0.08,0.1)--+(-0.08,-0.1);
\draw[gray,thick] (3.65,-0.5)--+(0.05,-0.1)--+(-0.05,0.1);

\draw[ultra thick] (3,0) arc (0:-30:3 and 1);
\draw[ultra thick] (3,0) arc (0:-30:3 and 1); 
\draw[ultra thick] (2.3,-0.65) arc (-40:-52:3 and 1) ;
\draw[ultra thick] (1.6,-0.85) arc (-59:-68:3 and 1) ;
\draw[ultra thick] (-3,0) arc (-180:-75:3 and 1) ;

\draw[ultra thick] (0.5,-1.2) arc (-165:165:0.3 and 0.6);
\draw[ultra thick] (1.2,-1.1) arc (-165:165:0.3 and 0.6);
\draw[ultra thick] (1.9,-0.9) arc (-165:165:0.3 and 0.6);

\draw[ultra thick,blue] (3.5,0.32)  arc (125:-165:0.5 and 1);
\draw[<-,>=latex,ultra thick,blue] (3.45,0.25)  arc (125:170:0.5 and 1);

\end{scope}

\end{tikzpicture}
 \caption{Two visions of $\tau^n$. \label{fig:taun}}
\end{figure}
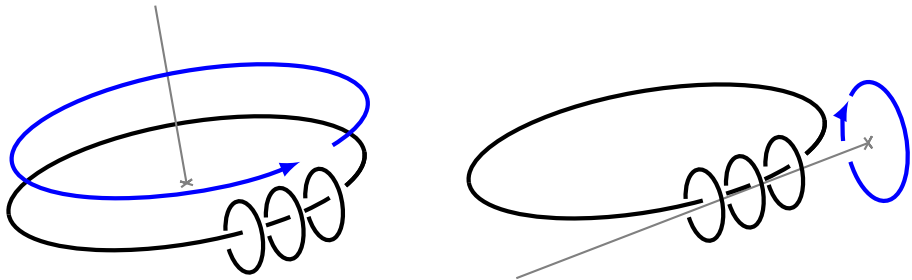

\subsection{The fundamental group of $\Conf \mathcal{L}_n$}
\label{ssec:pi1}

Before stating the main result  of this Section (Theorem \ref{th:circular}) we start with the particular case of    rigid configurations,  where
$K_0^*$ is the Euclidean circle in the plane $(Oxy)$ centered at $O$
and of radius $1$. A necklace having such a core circle $K_0^*$
is called a \defi{normalized necklace} and we denote it by $\mathcal{L}_n^*$.

\begin{lemma}
\label{lem:cbn}
$\Conf \mathcal{L}_n^*$ is homeomorphic to the configuration space $\mathcal{CB}_n$.  
\end{lemma}

\begin{proof}
We consider  $\mathcal{CB}_n$ as the configuration space of $n$
points lying on the annulus $A_0 = D_1 \setminus D_{\frac12}$, 
where $D_r$ denotes the closed disk in the plane $(Oxy)$ centered at $O$ of radius $r$.
To a normalized necklace $\mathcal{L}_n^*$ we associate $n$ points of $A_0$ as follows:
$$(K_1,\ldots,K_n) \longmapsto (K_1 \cap A_0, \ldots,K_n \cap A_0 ).$$
As each point of $A_0$ determines a unique normalized circle $K_i$,
this map is a bijection. This maps and its inverse are continuous.
\end{proof}  
  
As a consequence of previous Lemma we have that $\pi_1(\Conf \mathcal{L}_n^*)$ is isomorphic to $CB_n$. On the other hand in the following theorem we will prove that the inclusion $\Conf \mathcal{L}_n^* \to \Conf \mathcal{L}_n$ becomes a (proper) projection at the homotopy level.

\begin{theorem}
\label{th:circular}
For $n\ge 2$, the group  $\pi_1(\Conf \mathcal{L}_n)$ is isomorphic 
to the quotient $CB_n/\langle \zeta^{2n} \rangle$ of the circular braid group.  
\end{theorem}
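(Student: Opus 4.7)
The plan is to set up a Fadell--Neuwirth style fibration that singles out the core circle $K_0$ and then run the long exact sequence in homotopy. Let $\mathcal{M}_0$ denote the space of oriented Euclidean unit circles in $\Rr^3$; such a circle is determined by its center in $\Rr^3$ together with an oriented unit normal, so $\mathcal{M}_0 \simeq \Rr^3 \times S^2$, which is homotopy equivalent to $S^2$. The map $p \colon \Conf \mathcal{L}_n \to \mathcal{M}_0$ remembering only $K_0$ will be a locally trivial fibration; denote by $F$ its fiber over the standard unit circle $K_0^0$ in the $xy$-plane. Each component $K_i$ of a point in $F$ is uniquely encoded by its center $O_i \in K_0^0$ and its radius $r_i \in (0,\tfrac12)$, so $F$ is homeomorphic to the ordered configuration space of $n$ distinct points in the open annulus $K_0^0 \times (0,\tfrac12) \cong S^1 \times (0,\tfrac12)$, yielding $\pi_1(F) \cong CB_n$. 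Under this identification the swap of adjacent small circles and the cyclic shift correspond exactly to the generators $\sigma_i$ and $\zeta$.

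Since $\pi_1(\mathcal{M}_0) = 0$, the long exact sequence of the fibration collapses to
\[
\pi_2(\mathcal{M}_0) \xrightarrow{\ \partial\ } \pi_1(F) \longrightarrow \pi_1(\Conf \mathcal{L}_n) \longrightarrow 0,
\]
so the problem reduces to identifying the image of a generator of $\pi_2(\mathcal{M}_0) \cong \Zz$. The key ingredient will be the rigid $SO(3)$-action on $\Conf \mathcal{L}_n$: the orbit map of the basepoint necklace provides a morphism of fibrations from the classical bundle $SO(2) \hookrightarrow SO(3) \to SO(3)/SO(2) \cong S^2$ into ours, the stabilizer of the oriented $K_0^0$ being the $SO(2)$ of rotations around its axis, and inducing a homotopy equivalence on bases. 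By naturality of the connecting homomorphism, the computation reduces to two ingredients: the classical fact that $\partial \colon \pi_2(S^2) \to \pi_1(SO(2))$ is multiplication by $2$ (reflecting $\pi_1(SO(3)) = \Zz/2$, i.e.\ the Dirac belt trick), together with the identification of the image of the generator of $\pi_1(SO(2))$ inside $\pi_1(F) = CB_n$. The latter is precisely the content of the two pictures of $\tau^n$ recalled in Figure~\ref{fig:taun}: a full $2\pi$ rotation around the axis of $K_0^0$ leaves $K_0^0$ invariant while sliding each small circle once along $K_0^0$, realizing the loop $\zeta^n$ in $F$. Composing, $\partial$ sends the generator to $\zeta^{2n}$, whence $\mathrm{Im}(\partial) = \langle \zeta^{2n} \rangle$ and the theorem follows.

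The main obstacle I expect is this final naturality step: one must verify that the identification $\pi_1(F) \cong CB_n$ really sends the $SO(2)$-orbit loop to $\zeta^n$ on the nose, and that the factor of $2$ from $\pi_1(SO(3))$ produces exactly $\zeta^{2n}$ rather than some conjugate or variant. Two secondary technical points are essentially routine but deserve care: checking that $p$ is genuinely a locally trivial fibration, and verifying that the parametrization of $F$ by pairs $(O_i, r_i)$ respects the planarity requirement built into the definition of a necklace.
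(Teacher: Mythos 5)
Your argument is correct and follows the paper's overall skeleton (fibration over $S^2$ with fiber the normalized-necklace space $\cong$ annulus configurations, then the long exact sequence), but it evaluates the connecting homomorphism by a genuinely different device. The paper computes $d$ on the generator of $\pi_2(S^2)$ by constructing an explicit lift, using a dipole vector field on $S^2$ to produce the family of necklaces realizing $\tau^{2n}$ (figures \ref{fig:dipole} and \ref{fig:tau2n}); its map to $SO_3$ is the frame map $K\colon \Conf\mathcal{L}_n \to SO_3$, used only to force $G_2=0$, i.e.\ in the opposite direction to yours. You instead map the classical bundle $SO(2)\hookrightarrow SO(3)\to S^2$ \emph{into} the fibration via the orbit map of the rigid rotation action and invoke naturality of $\partial$, reducing everything to $\pi_1(SO(3))=\Zz/2$ plus the identification of the $SO(2)$-orbit loop with $\zeta^n$ (the paper's ``two visions of $\tau^n$'', figure \ref{fig:taun}). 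Your route replaces the explicit lifting by soft naturality, and it settles local triviality for free: an $SO(3)$-equivariant map onto the homogeneous space $S^2=SO(3)/SO(2)$ is automatically a fiber bundle. The paper's lift buys a concrete geometric picture of the kernel element, and injectivity of $d$ (from $\pi_2(SO_3)=0$), which your argument does not need since only $\Im\partial$ matters.

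Your flagged ``main obstacle'' is harmless: taking the rotationally symmetric necklace as basepoint, the $2\pi$-rotation loop is literally the simultaneous full rotation of the $n$ annulus points, which is $\zeta^n$; and since $\zeta^n$ generates the center of $CB_n$, any conjugation ambiguity leaves the subgroup $\langle\zeta^{2n}\rangle$ unchanged. Two small corrections: the paper's definition of a necklace pins $K_0$ at center $O$ with radius $1$, so the base is exactly $S^2$ (the normal $u_0$), not $\mathcal{M}_0\simeq\Rr^3\times S^2$; and the fiber is the \emph{unordered} configuration space of the annulus (ordered would give the pure circular braid group, and your loops $\sigma_i$, $\zeta$ only exist with unlabelled circles), exactly as in Lemma \ref{lem:cbn}.
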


\begin{proof}
To a necklace $\mathcal{L}_n$ having core circle $K_0$
we associate a unit vector $u_0$, orthogonal to the plane containing
$K_0$ (and oriented according to the orientation of $K_0$).

\begin{figure}
 \begin{tikzpicture}[scale=0.8]

\begin{scope}[rotate=-30]

\draw[ultra thick] (3,0) arc (0:87:3 and 1);
\draw[ultra thick] (-3,0) arc (0:87:-3 and 1);


\draw[->,>=latex,blue, thick] (0,-0)--(0,2) node[black, below right] {$u_0$};
\draw[->,>=latex,blue, thick] (0,-0)--(-55:0.8) node[midway, black,left] {$u_1$};
\fill[blue] (0,0) circle (3pt) node[black, right]{$O$};
\node[right] at (2,1) {$K_0$}; 
\node[right] at (-73:2.1) {$K_1$}; 

\draw[ultra thick] (3,0) arc (0:-30:3 and 1);
\draw[ultra thick] (3,0) arc (0:-30:3 and 1); 
\draw[ultra thick] (2.3,-0.65) arc (-40:-52:3 and 1) ;
\draw[ultra thick] (1.6,-0.85) arc (-59:-68:3 and 1) ;
\draw[ultra thick] (-3,0) arc (-180:-75:3 and 1) ;

\draw[ultra thick] (0.5,-1.2) arc (-165:165:0.3 and 0.6);
\draw[ultra thick] (1.2,-1.1) arc (-165:165:0.3 and 0.6);
\draw[ultra thick] (1.9,-0.9) arc (-165:165:0.3 and 0.6);


\end{scope}

\end{tikzpicture} 
 \caption{The normal vector $u_0$. \label{fig:uzero}}
\end{figure}
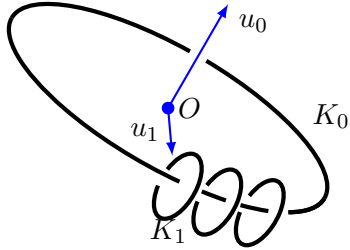

Let $G : \Conf(\mathcal{L}_n) \to S^2$ be the map defined
by $G(\mathcal{L}_n) = u_0$. This map $G$ is a locally trivial fibration
whose fiber is $\Conf \mathcal{L}_n^*$ (for the unit vector $u_0 = (0,0,1)$).
The long exact sequence in homotopy for the fibration 
$\Conf \mathcal{L}_n^* \hookrightarrow \Conf \mathcal{L}_n \twoheadrightarrow S^2$
provides:
$$
\begin{array}{c}
0 
\longrightarrow \pi_2(\Conf \mathcal{L}_n^*)
\overset{H_2}{\longrightarrow} \pi_2(\Conf \mathcal{L}_n)
\overset{G_2}{\longrightarrow} \pi_2(S^2) \\
\qquad\qquad \overset{d}{\longrightarrow} \pi_1(\Conf \mathcal{L}_n^*)
\overset{H_1}{\longrightarrow} \pi_1(\Conf \mathcal{L}_n)
\longrightarrow 0
\end{array}$$
It implies that $H_1$ is surjective (since in the exact sequence $\pi_1(S^2)$ is trivial).

Before computing the kernel of $H_1$, we give a motivation why $H_1$ is not injective.
There is a natural map $K$ from $\Conf \mathcal{L}_n$ to $SO_3$.
Let us see $SO_3$ as the space of  (right handed) orthonormal frames $(u_0,u_1,u_2)$.
To a necklace $\mathcal{L}_n$ we associate $u_0$ as above, while $u_1$ is the unit vector
from the origin $O$ to the center of $K_1$, then we set $u_2 = u_0 \wedge u_1$ (see figure \ref{fig:uzero}). 

Let us denote by $\pi : SO_3 \to S^2$,  the natural projection 
$\pi(u_0,u_1,u_2) = u_0$. 
We have a commutative diagram, that is to say $G = \pi \circ K$.
But as $\pi_2(SO_3) = 0$, it implies $G_2 = 0$. By the exact sequence, $d$ is injective, so that
$\pi_2(S^2) = \Zz \cong \Im d = \Ker H_1$.

There is another interesting point with $SO_3$. In fact if we now see $SO_3$ 
as the space of rotations, we denote 
by $\rho$ a full rotation around the vertical axis (supported by $u_0$).
Then $\rho \neq \id$, but $\rho^2 \simeq \id$ (because $\pi_1(SO_3) \cong \Zz/2\Zz$).
For us the move $\rho$ corresponds to the full rotation $\tau^n$. 
It gives the idea that in $\Conf \mathcal{L}_n^*$, $\tau^n$ generates a subgroup isomorphic to 
 $\Zz$, but $\tau^{2n}$ is homotopic to $\id$ in $\Conf \mathcal{L}_n$.

We will now compute the kernel of $H_1$ as $\Im d$, where $d$ is the boundary map 
coming from the exact sequence : $\Ker H_1 = \langle \tau^{2n} \rangle$.

We go back to the construction of this boundary map $d$ (see for instance,
\cite[Theorem 6.12]{Hu}) and we describe one of the liftings explicitly.
Let $f$ be the generator of $\pi_2(S^2) \cong \Zz$ defined by 
$f : S^2 \to S^2$, $f(x)=x$, but we prefer to see it as a map 
$f : I \times I \to S^2 \quad \text{such that}\quad f(\partial I^2) = N$
where $N=(0,0,1)$ is the North pole of $S^2$.

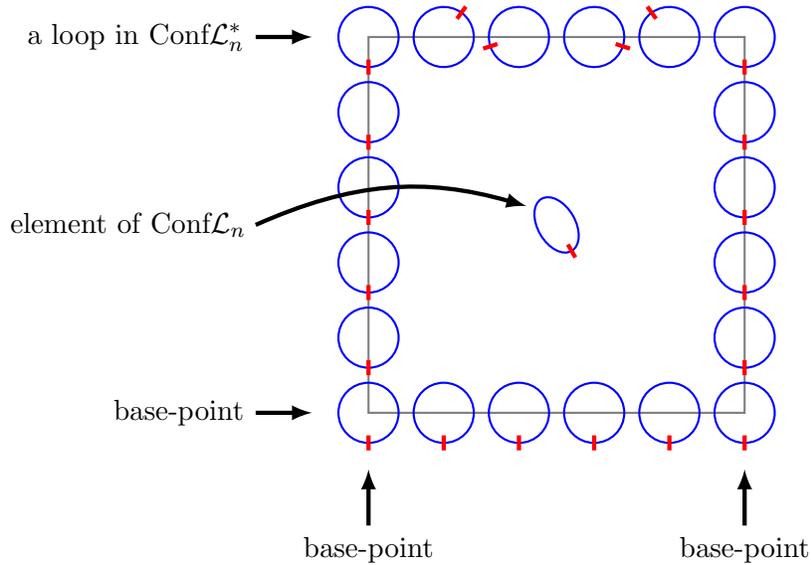
\begin{figure}
 \begin{tikzpicture}[scale=0.5]

\def\link{
  \draw[blue, thick] (0,0) circle(0.8); 
  \draw[ultra thick, red] (0,-0.6)--(0,-1);
}

\draw[thick, gray] (0,0) rectangle (10,10);

  \foreach \r in {0,2,...,10} {
      \begin{scope}[xshift=\r cm]
           \link
       \end{scope}
    }
  \foreach \r in {2,4,...,10} {
      \begin{scope}[yshift=\r cm]
           \link
       \end{scope}
    }
  \foreach \r in {2,4,...,10} {
      \begin{scope}[xshift=10cm,yshift=\r cm]
           \link
       \end{scope}
    }
  \foreach \r in {2,4,6,8} {
      \begin{scope}[xshift=\r cm, yshift = 10cm, rotate=72*\r]
           \link
       \end{scope}
    }

\begin{scope}[xshift=5 cm,yshift=5 cm, rotate=30]
  \draw[blue,thick] (0,0) ellipse(0.5 and 0.8); 
  \draw[ultra thick, red] (0,-0.6)--(0,-1);
\end{scope}

\draw[->,>=latex,ultra thick] (0,-3)--+(0,1.5) node[pos=0, below] {base-point};
\draw[->,>=latex,ultra thick] (10,-3)--+(0,1.5) node[pos=0, below] {base-point};
\draw[->,>=latex,ultra thick] (-3,0)--+(1.5,0) node[pos=0, left] {base-point};
\draw[->,>=latex,ultra thick] (-3,10)--+(1.5,0) node[pos=0, left] {a loop in $\text{Conf}\mathcal{L}_n^*$};
\draw[->,>=latex,ultra thick] (-3,5) to[bend left = 20] (4.2,5.5);
\node[left] at (-3,5) {element of $\text{Conf} \mathcal{L}_n$};

\end{tikzpicture}
 \caption{\label{fig:lift}The lifting $h$ from $I\times I$ to $\Conf \mathcal{L}_n$ ($K_0$ in blue, $K_1$ in red).}
\end{figure}

We lift $f$ to a map $\tilde f$ from $\partial I \times I \cup I \times \{0\}$
to the base-point $\mathcal{L}_n^*$ in $\Conf \mathcal{L}_n$ (and $\Conf \mathcal{L}_n^*$, for which 
$u_0$ is $\overrightarrow{ON}$).

By the homotopy lifting property, it extends to a map
$h : I\times I \to \Conf \mathcal{L}_n$,
and $d(f) \in \pi_1(\Conf \mathcal{L}_n^*)$ is the map 
induced by $h_{|I\times \{1\}}$ (figure \ref{fig:lift}).

One way to make this lift explicit  is to see $I^2 \setminus \partial I^2$ as $S^2 \setminus \{N\}$.
On $S^2$ there exists a continuous vector field, 
that is non-zero except at $N$ (see the dipole integral curves in figure \ref{fig:dipole}, see also \cite[section 5]{CaPa} 
for other links between vector fields and configuration spaces).
Now at each point $u_0 \in S^2 \setminus \{N\}$ is associated a non-zero vector $u_1$.
Let us define $h : S^2 \setminus \{N\} \to \Conf \mathcal{L}_n$
as follows: for $u_0 \in S^2\setminus \{N\}$, $h(u_0)$ is the unique
necklace $\mathcal{L}_n$, such $G(\mathcal{L}_n) = u_0$ and
the unit vector from $O$ to $K_1$ is $u_1$. We define $K_2,\ldots,K_n$ as parallel copies of $K_1$
(figure \ref{fig:neck}).
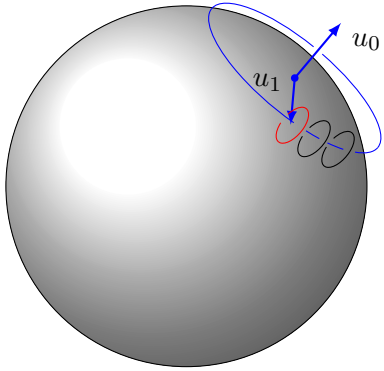
\begin{figure}
 \begin{tikzpicture}[scale=1.2]

\filldraw[ball color=white] (0,0) circle (2);

\begin{scope}[scale = 0.4, xshift=3cm,yshift=3cm, rotate=-40,]

\draw[blue] (3,0) arc (0:87:3 and 1);
\draw[blue] (-3,0) arc (0:87:-3 and 1);


\draw[->,>=latex,blue, thick] (0,-0)--(0,2) node[black, below right] {$u_0$};
\draw[->,>=latex,blue, thick] (0,-0)--(-55:1.3) node[midway, black,above left] {$u_1$};
\fill[blue] (0,0) circle (3pt); 

\draw[blue] (3,0) arc (0:-30:3 and 1); 
\draw[blue] (2.3,-0.65) arc (-40:-52:3 and 1) ;
\draw[blue] (1.6,-0.85) arc (-59:-68:3 and 1) ;
\draw[blue] (-3,0) arc (-180:-75:3 and 1) ;

\draw[red] (0.5,-1.2) arc (-165:165:0.3 and 0.6);
\draw (1.2,-1.1) arc (-165:165:0.3 and 0.6);
\draw (1.9,-0.9) arc (-165:165:0.3 and 0.6);


\end{scope}

\end{tikzpicture}
 \caption{\label{fig:neck}A necklace constructed from $u_0 \in S^2$ and $u_1 \in T_{u_0} S^2$.}
\end{figure}

Now $d(f)$ corresponds to the lift $h(\gamma)$ where $\gamma$ is a small loop in $S^2$ around $N$.
Due to the dipole figure at $N$, $h(\gamma)$ is exactly $\tau^{2n}$ (figures \ref{fig:vector} and \ref{fig:tau2n}).

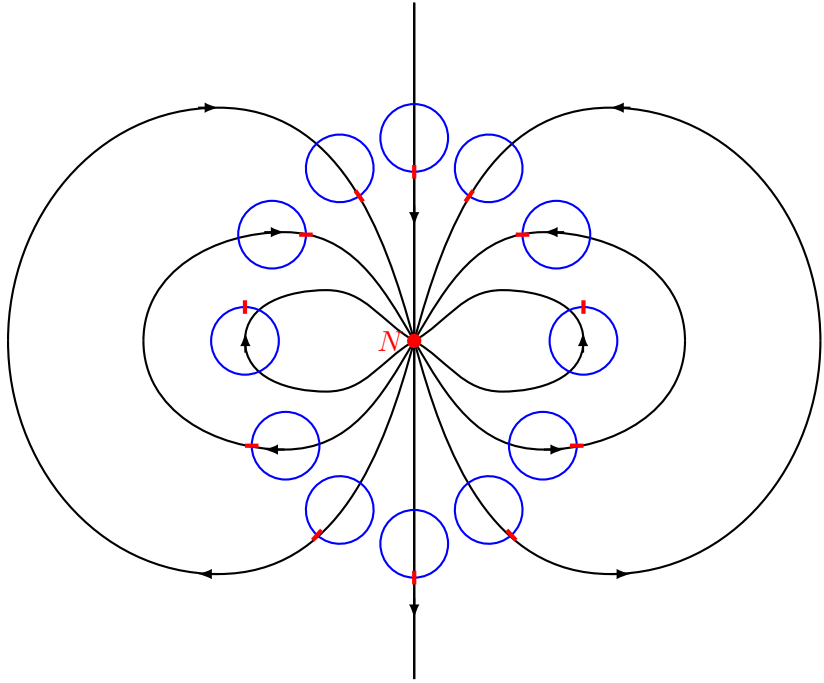
\begin{figure}
 \begin{tikzpicture}[scale=0.9]

\draw[thick] (0,0) to [out=-60,in=180] (-40:2.5) to [out=0,in=-90] (0:4);
\draw[thick] (0,0) to [out=-75,in=180] (-50:4.5) to [out=0,in=-90] (0:6);
\draw[thick] (0,0) to (-90:5);
\draw[thick] (0,0) to [out=-30,in=180] (-30:1.5) to [out=0,in=-90] (0:2.5);

\begin{scope}[cm={1,0,0,-1,(0,0)}]
  \draw[thick]  (0,0) to [out=-60,in=180] (-40:2.5) to [out=0,in=-90] (0:4);
  \draw[thick] (0,0) to [out=-75,in=180] (-50:4.5) to [out=0,in=-90] (0:6);
  \draw[thick] (0,0) to (-90:5);
  \draw[thick] (0,0) to [out=-30,in=180] (-30:1.5) to [out=0,in=-90] (0:2.5);
\end{scope}

\begin{scope}[cm={-1,0,0,-1,(0,0)}]
  \draw[thick]  (0,0) to [out=-60,in=180] (-40:2.5) to [out=0,in=-90] (0:4);
  \draw[thick] (0,0) to [out=-75,in=180] (-50:4.5) to [out=0,in=-90] (0:6);
  \draw[thick] (0,0) to (-90:5);
  \draw[thick] (0,0) to [out=-30,in=180] (-30:1.5) to [out=0,in=-90] (0:2.5);
\end{scope}

\begin{scope}[cm={-1,0,0,1,(0,0)}]
  \draw[thick]  (0,0) to [out=-60,in=180] (-40:2.5) to [out=0,in=-90] (0:4);
  \draw[thick] (0,0) to [out=-75,in=180] (-50:4.5) to [out=0,in=-90] (0:6);
  \draw[thick] (0,0) to (-90:5);
  \draw[thick] (0,0) to [out=-30,in=180] (-30:1.5) to [out=0,in=-90] (0:2.5);
\end{scope}

\draw[->,>=latex,thick] (-90:3.8)--+(0,-0.3);
\draw[->,>=latex,thick] (90:2)--+(0,-0.3);
\draw[->,>=latex,thick] (-4:2.5)--+(0,0.3);
\draw[->,>=latex,thick] (4:-2.5)--+(0,0.3);

\draw[->,>=latex,thick] (-40:2.5)--+(0.3,0);
\draw[<-,>=latex,thick] (40:2.5)--+(0.3,0);

\draw[->,>=latex,thick] (-50:4.5)--+(0.3,0);
\draw[<-,>=latex,thick] (50:4.5)--+(0.3,0);

\begin{scope}[cm={-1,0,0,1,(0,0)}]
\draw[->,>=latex,thick] (-40:2.5)--+(0.3,0);
\draw[<-,>=latex,thick] (40:2.5)--+(0.3,0);

\draw[->,>=latex,thick] (-50:4.5)--+(0.3,0);
\draw[<-,>=latex,thick] (50:4.5)--+(0.3,0);
\end{scope}

\fill[red] (0,0) circle (3pt) node[left]{$N$};

\def\link{
  \draw[blue, thick] (0,0) circle(0.5); 
  \draw[ultra thick, red] (0,-0.4)--(0,-0.6);
}
 \begin{scope}[yshift=-3 cm] \link  \end{scope}
 \begin{scope}[yshift=3 cm] \link  \end{scope}

 \begin{scope}[xshift = 1.1cm, yshift=-2.5 cm,rotate = 42] \link  \end{scope}
 \begin{scope}[xshift = 1.9cm, yshift=-1.55 cm,rotate = 90] \link  \end{scope}
 \begin{scope}[xshift = 2.1cm, yshift=1.57 cm,rotate = -90] \link  \end{scope}
 \begin{scope}[xshift = 1.1cm, yshift=2.55 cm,rotate = -35] \link  \end{scope}

 \begin{scope}[xshift = 2.5cm, yshift=0 cm,rotate = 180] \link  \end{scope}

 \begin{scope}[xshift = -1.1cm, yshift=-2.5 cm,rotate = -42] \link  \end{scope}
 \begin{scope}[xshift = -1.9cm, yshift=-1.55 cm,rotate = -90] \link  \end{scope}
 \begin{scope}[xshift = -2.1cm, yshift=1.57 cm,rotate = 90] \link  \end{scope}
 \begin{scope}[xshift = -1.1cm, yshift=2.55 cm,rotate = 35] \link  \end{scope}

 \begin{scope}[xshift = -2.5cm, yshift=0 cm,rotate = 180] \link  \end{scope}

\end{tikzpicture}
 \caption{\label{fig:vector}The vector fields on $S^2$ around $N$ and the family of necklaces.
 \label{fig:dipole}}
\end{figure}

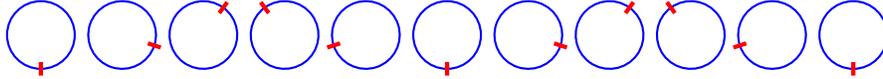
\begin{figure}
 \begin{tikzpicture}[scale=0.9]

\def\link{
  \draw[blue, thick] (0,0) circle(0.5); 
  \draw[ultra thick, red] (0,-0.4)--(0,-0.6);
}

  \foreach \r in {0,...,10} {
      \begin{scope}[xshift={1.2*\r cm},  rotate=72*\r]
           \link
       \end{scope}
    }

\end{tikzpicture}
 \caption{\label{fig:tau2n}The family of necklaces $\tau^{2n}$.}
\end{figure}

A first conclusion is that $\Ker H_1 = \langle d(f) \rangle = \langle \tau^{2n} \rangle$.
And finally : 

$$\pi_1(\Conf \mathcal{L}_n)
\cong 
\pi_1(\Conf \mathcal{L}_n^*) / \langle \tau^{2n} \rangle
\cong
CB_n/\langle \zeta^{2n} \rangle$$
\end{proof}

Let us end the section with a few remarks:

\begin{remark}
 Since $\zeta^n$ generates the center of $CB_n$ then the group generated  by  $\zeta^{2n}$ is normal: so we can effectively write $\langle \zeta^{2n} \rangle$
instead of  $\langle \langle \zeta^{2n} \rangle \rangle$; let us also recall that, denoting by $Mod_{n+2}(S^2)$  the mapping class group
of the $n+2$-punctured sphere,  $CB_n/\langle \zeta^{n} \rangle$ is isomorphic to the subgroup
of $Mod_{n+2}(S^2)$ fixing  two punctures (see \cite[Section 2]{CC} where $CB_n/\langle \zeta^{n} \rangle$ is denoted by $A(B_n)/Z$).
\end{remark}

\begin{remark} \label{rk:circular}
 In Theorem  \ref{th:circular}  we constructed a map $h_n: CB_n \to \pi_1(\Conf \mathcal{L}_n)$
where the generator $\zeta$ in $CB_n$ corresponds to  the move $\tau$ in $\pi_1(\Conf \mathcal{L}_n)$,
while   the generator $\sigma_i$ in $CB_n$ corresponds to the move $\sigma_i$ which permutes the $i$-th circle
with the $i+1$-th one (modulo $n$) by passing the $i$-th circle through the $i+1$-th. The kernel of $h_n$ 
is the group generated  by  $\zeta^{2n}$ and the group $\pi_1(\Conf \mathcal{L}_n)$ admits the following 
group presentation:

$$\pi_1(\Conf \mathcal{L}_n) = \left\langle \sigma_1,\ldots,\sigma_n, \tau \mid
\begin{array}{l}
\sigma_i\sigma_{i+1}\sigma_i = \sigma_{i+1}\sigma_i\sigma_{i+1} \; \text{ for } i = 1,\ldots, n, \\
\sigma_i\sigma_j=\sigma_j\sigma_i \quad \text{ for } |i-j| \neq 1, \\
\conjug{\tau} \sigma_i \tau =\sigma_{i+1} \quad \text{ for } i = 1,2\ldots, n \\
\tau^{2n}=1
\end{array}
\right\rangle
$$
\end{remark}


\section{Action on $\pi_1(\Rr^3\setminus \mathcal{L}_n$)}
\label{sec:action}

 As recalled in the introduction, we denote:
 \begin{itemize}
 \item  the configuration space 
  of $n$ unlinked Euclidean circles  by $\mathcal{R}_n$ and $\pi_1(\mathcal{R}_n)$ by $R_n$;
\item  the configuration space of $n$ unlinked Euclidean circles being all parallel to a fixed plane, say the $yz$-plane,
by $\mathcal{UR}_n$ and  its fundamental group by $WB_n$.
\end{itemize}

According to Proposition 2.2 of \cite{BH},  $WB_n$ can be seen as a subgroup of $R_n$ and it is generated by two families of elements, $\rho_i$ and $\sigma_i$ 
(see figure \ref{fig:moves} and \cite{BH}):  $\rho_i$ is the path permuting the $i$-th and the $i+1$-th circles by passing over,
while $\sigma_i$ permutes them by passing the $i$-th circle through the $i+1$-th.

A \defi{motion} of a compact submanifold $N$ in a  manifold $M$ is a path $f_t$
in $\text{Homeo}_c(M)$ such that $f_0=\textup{id}$ and $f_1(N)=N$, where $\text{Homeo}_c(M)$
denotes  the group of homeomorphisms of $M$ with compact support. A
motion is called \defi{stationary} if $f_t(N)=N$ for all $t\in[0,1]$. 

The \defi{motion
group} $\mathcal{M}(M,N)$ of $N$ in $M$ is the group of equivalence classes of
motions of $N$ in $M$  where two motions $f_t,g_t$ are equivalent if
$g_t^{-1}f_t$ is homotopic relative to endpoints to a stationary motion.
The notion of motion groups was proposed by R.~Fox, and studied
by P.~Dahm, one of his students. The first published article on the topic is \cite{G1}.

Notice that motion groups generalize fundamental groups of configuration spaces,
and  in our cases each motion is equivalent to a motion that fixes
a point $*\in M \setminus N$. When $M$ is non-compact, it is possible to define a homomorphism
(the \emph{Dahm morphism}):
\begin{equation*}
D_N: \mathcal{M}(M,N)\to \Aut(\pi_1(M \setminus N,*))
\end{equation*}
 sending an element represented by the motion $f_t$, into the automorphism
induced on $\pi_1(M\setminus N,*)$ by $f_1$.

When $M=\Rr^3$ and $N$ is a set $\mathcal{L}'_n$ of $n$ unlinked Euclidean circles 
we get a map 
\begin{equation*}
D_{\mathcal{L}'_n}:R_n \to \Aut(\pi_1(\Rr^3 \setminus \mathcal{L}'_n,*))
\end{equation*}
This map is injective (see \cite{G1}) and sends generators of $R_n$ (and therefore of $WB_n$) to
 the following  automorphisms of the free group $F_n=\langle x_1, \ldots, x_n \rangle$: 
$$\sigma_i : 
\left\{\begin{array}{l}
x_i \mapsto x_{i} x_{i+1} \conjug{x_{i}} \\
x_{i+1} \mapsto x_i \\
x_{j} \mapsto x_j \quad j \neq i,i+1\\     
\end{array}\right.
\; 
\rho_i : 
\left\{\begin{array}{l}
x_i \mapsto   x_{i+1}   \\
x_{i+1} \mapsto x_i \\
x_{j} \mapsto x_j \quad  j \neq i,i+1\\       
\end{array}\right.
$$
$$
\tau_j : 
\left\{\begin{array}{l}
x_j \mapsto   \conjug{x_j}   \\
x_{k} \mapsto x_k \quad   k \neq j\\       
\end{array}\right.$$ 
where $i=1,\ldots, n-1$ and $j=1,\ldots, n$.
 
Now let  $\mathcal{L}_n$ be a necklace: by forgetting 
the core circle $K_0$ we obtain a map from $\Conf \mathcal{L}_n$ to $\mathcal{R}_n$.
To  $\Gamma = \mathcal{L}_n(t)$ in $\pi_1(\Conf \mathcal{L}_n)$,
we can associate an automorphism 
$D_{\mathcal{L}_n}(\Gamma) : \pi_1(\Rr^3\setminus \mathcal{L}_n) 
\to \pi_1(\Rr^3\setminus \mathcal{L}_n)$.   It is easy to  compute the $\pi_1$ of the complement of $\mathcal{L}_n$ by giving its Wirtinger presentation:
$$\pi_1(\Rr^3 \setminus \mathcal{L}_n) = \left\langle x_1,\ldots,x_n,y \mid x_iy=yx_i, i=1\ldots,n \right\rangle.$$
A direct justification, suggested by the referee, is the following:
consider $\mathcal{L}_n$ as a link in $S^3$ (instead of $\Rr^3$) then
$S^3\setminus K_0$ is homeomorphic to the torus $S^1\times \Rr^2$ and 
$S^3\setminus\mathcal{L}_n$ is homeomorphic to $S^1 \times (\Rr^2 \setminus \{ n \text{ points}\})$.

So that we obtain
$$\pi_1(\Rr^3 \setminus \mathcal{L}_n) \cong F_n \times \Zz.$$
The action of the generators of $\pi_1(\Conf \mathcal{L}_n)$ are (with indices  modulo $n$):
$$\sigma_i : 
\left\{\begin{array}{l}
x_i \mapsto x_{i} x_{i+1} \conjug{x_{i}} \\
x_{i+1} \mapsto x_i \\
x_{j} \mapsto x_j \quad j \neq i,i+1\\
y \mapsto y         
\end{array}\right.
\qquad 
\tau : 
\left\{\begin{array}{l}
x_j \mapsto x_{j+1} \\
y \mapsto y         
\end{array}\right.$$

Notice that the action of $\pi_1(\Conf \mathcal{L}_n)$ on $F_n \times \Zz$  is actually  the product of
an action on $F_n$  and the identity action on $\Zz$. The following Lemma characterizes therefore the kernel 
of the action of $\pi_1(\Conf \mathcal{L}_n)$ on $F_n$.

\begin{lemma}
\label{lem:dahm}

\begin{enumerate}
\item Let  $D_{\mathcal{L}_n} : \pi_1(\Conf \mathcal{L}_n) \to \Aut  \pi_1(\Rr^3\setminus \mathcal{L}_n)$
be  the  Dahm morphism;
then $\Ker D_{\mathcal{L}_n} = \langle \tau^n \rangle = \Zz / 2\Zz$.
\item Let $\Phi$ be the natural map $\Phi : \pi_1(\Conf \mathcal{L}_n) \to  R_n$ 
induced by forgetting the core circle $K_0$. Then $\Ker \Phi = \langle \tau^n \rangle = \Zz/ 2\Zz$.
\end{enumerate}
\end{lemma}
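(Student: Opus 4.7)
The plan is to reduce both parts to Theorem \ref{thmCB} via the identification $\pi_1(\Conf \mathcal{L}_n) \cong CB_n/\langle \zeta^{2n}\rangle$ from Theorem \ref{th:circular}, under which $\tau$ corresponds to $\zeta$ and each $\sigma_i$ to $\sigma_i$.

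For part (1), I would first note from the explicit formulas recalled just above that $D_{\mathcal{L}_n}(\gamma)$ always fixes $y$, so $D_{\mathcal{L}_n}$ factors through $\Aut F_n$ acting on the free factor of $F_n \times \Zz$. This factored morphism is precisely the representation $\rho_{CB}$ of Theorem \ref{thmCB}, passed to the quotient by $\langle \zeta^{2n}\rangle$. Passage to the quotient is legitimate because $\rho_{CB}(\zeta^n)=\id$: the formula $x_j \mapsto x_{j+n}$ is the identity when indices are read modulo $n$. Theorem \ref{thmCB} then gives $\Ker \rho_{CB} = \langle \zeta^n \rangle$, so the kernel of the induced morphism on $CB_n/\langle \zeta^{2n}\rangle$ is $\langle \zeta^n \rangle/\langle \zeta^{2n}\rangle \cong \Zz/2\Zz$. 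Translating via $\zeta \leftrightarrow \tau$ yields $\Ker D_{\mathcal{L}_n} = \langle \tau^n \rangle$.

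For part (2), I would exploit naturality of the Dahm construction. Forgetting $K_0$ simultaneously induces the homomorphism $\Phi$ and an inclusion-induced surjection $\pi_1(\Rr^3 \setminus \mathcal{L}_n) = F_n \times \Zz \twoheadrightarrow F_n = \pi_1(\Rr^3 \setminus (\mathcal{L}_n \setminus K_0))$ that kills $y$. Naturality then identifies $D_{\mathcal{R}_n} \circ \Phi$ with the restriction of $D_{\mathcal{L}_n}$ to the action on the $F_n$-factor. Since $D_{\mathcal{L}_n}$ fixes $y$, its action on $F_n \times \Zz$ is completely determined by its action on $F_n$, so $\Ker D_{\mathcal{L}_n} = \Ker(D_{\mathcal{R}_n} \circ \Phi)$. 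Combined with the injectivity of $D_{\mathcal{R}_n}$ recalled in the text, this gives $\Ker(D_{\mathcal{R}_n} \circ \Phi) = \Ker \Phi$, and (1) closes the argument.

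The only genuine subtlety, really just a bookkeeping check, is to verify that the isomorphism of Theorem \ref{th:circular} is geometrically compatible with the Dahm action, i.e.\ that $\tau$ really induces the shift $x_j \mapsto x_{j+1}$ modulo $n$ on $\pi_1(\Rr^3 \setminus \mathcal{L}_n)$. This is clear from the geometric description of $\tau$ as the cyclic permutation of the beads $K_i$ along $K_0$, but it is the step at which the dictionary between the presentation of $CB_n/\langle \zeta^{2n}\rangle$ and the topological data of the necklace gets fixed; without it the whole reduction would be merely formal.
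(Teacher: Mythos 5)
Your proposal is correct and follows essentially the same route as the paper: you reduce part (1) to Theorem \ref{thmCB} through the ``forget $y$'' map (the paper's $\Psi$ and the identity $\Psi \circ D_{\mathcal{L}_n} \circ h_n = \rho_{CB}$), recover $\Ker D_{\mathcal{L}_n}$ itself from the observation that every automorphism in the image fixes $y$ (the paper's point that $\Psi$ is injective on $\Im D_{\mathcal{L}_n}$), and settle part (2) by the same commutative diagram together with Goldsmith's injectivity of the Dahm morphism for the trivial link. The ``bookkeeping check'' you flag at the end is exactly what the paper records in Remark \ref{rk:circular}, so nothing is missing.
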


\begin{proof}
We first notice some facts:

(a) The following diagram is commutative :
$$ \xymatrix{
{}\pi_1(\Conf \mathcal{L}_n) \ar[r]^-{D_{\mathcal{L}_n}}\ar[d]_-{\Phi}  & \Aut  \pi_1(\Rr^3\setminus \mathcal{L}_n) \ar[d]^-{\Psi} \\
  R_n \ar[r]_-{D_{\mathcal{L}'_n}}        &  \Aut  \pi_1(\Rr^3\setminus \mathcal{L}'_n)
}
$$
where $\Phi$ and $\Psi$ are natural maps induced by the inclusion $\mathcal{L}'_n \subset \mathcal{L}_n$.
Remark that  $\Psi$ is then the  map which forgets the generator $y$ in $ \pi_1(\Rr^3\setminus \mathcal{L}_n)$ and therefore 
$$\psi(D_{\mathcal{L}_n})(\sigma_i) : 
\left\{\begin{array}{l}
x_i \mapsto x_{i} x_{i+1} \conjug{x_{i}} \\
x_{i+1} \mapsto x_i \\
x_{j} \mapsto x_j \quad j \neq i,i+1       
\end{array}\right.
\qquad 
\psi(D_{\mathcal{L}_n})(\tau) : 
\left\{\begin{array}{l}
x_j \mapsto x_{j+1}        
\end{array}\right.$$
where indices are modulo $n$.
Comparing with Theorem \ref{thmCB} and \ref{th:circular} we deduce that the $ \Psi \circ D_{\mathcal{L}_n} \circ h_n = \rho_{CB}$
and  $\Ker \Psi \circ D_{\mathcal{L}_n} \circ h_n = \langle \zeta^n \rangle$ 
(recall that  $h_n : CB_n \to \pi_1(\Conf \mathcal{L}_n)$ was defined  in \ref{rk:circular}).

(b) As we already recall, it is known that for the trivial link $\mathcal{L}'_n$ 
the Dahm morphism $D_{\mathcal{L}'_n}$ is injective (\cite{G1}).

(c) $\Psi$ is  injective when restricted to the image of $D_{\mathcal{L}_n}$.
If $\Psi(f)=\id$, then $f(x_i)=x_i$ for all $i=1,\ldots,n$.
If $f \in \Im D_{\mathcal{L}_n}$, then due to the action of the generators ($\sigma_i$ and $\tau$), it implies $f(y)=y$.
Finally if $\Psi(f)=\id$ and $f \in \Im D_{\mathcal{L}_n}$, then $f=\id$. Then $\Ker D_{\mathcal{L}_n} \circ h_n = \langle \zeta^n \rangle$.
Clearly, $ \tau^n \in \Ker D_{\mathcal{L}_n} $; on the other hand 
since $h_n$ is surjective, if $x \in \Ker D_{\mathcal{L}_n} $, 
then $x\in \Im (\langle \zeta^n \rangle)$ and therefore $x \in \langle \tau^n \rangle$
(see Remark \ref{rk:circular}).

For the second statement it is therefore enough to prove that the kernel of 
$D_{\mathcal{L}_n}$ coincides with  the kernel of $\Phi$.
$$
\begin{array}{rcll}
\gamma \in \Ker \Phi 
& \iff & \Phi(\gamma)= \id \\
& \iff & D_{\mathcal{L}'_n} \circ \Phi (\gamma) = \id &\qquad \text{because $D_{\mathcal{L}'_n}$ is injective} \\
& \iff & \Psi \circ D_{\mathcal{L}_n}(\gamma) = \id &\qquad \text{because the diagram commutes} \\
& \iff & D_{\mathcal{L}_n}(\gamma) = \id &\qquad \text{because $\Psi_{| D_{\mathcal{L}_n}}$  is injective}\\
& \iff & \gamma \in \Ker  D_{\mathcal{L}_n}(\gamma)&\\
\end{array}
$$

\end{proof}

\section{Characterization of automorphisms}
\label{sec:auto}

We will use the following notation: for a word $\conjug{w}= w^{-1}$, 
for an automorphism $\conjug{\alpha}=\alpha^{-1}$.
A famous result of Artin, characterizes automorphisms induced by braids.
\begin{theorem}[Artin]
The automorphisms induced by the action of $B_n$ on $F_n$ are exactly 
the automorphisms $\phi$ of $\Aut F_n$ that verify the two conditions below:
\begin{equation}
\label{eq:artinconj1}
\phi(x_i)=w_i x_{\pi(i)} \conjug{w_i}  
\end{equation}
for some $w_1,\ldots,w_n \in F_n$ and some permutation $\pi \in \mathcal{S}_n$, and:
\begin{equation}
\label{eq:artinconj2}
\phi(x_1x_2\cdots x_n) = x_1x_2\cdots x_n
\end{equation}
\end{theorem}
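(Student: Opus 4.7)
The plan is to prove both directions of the equivalence. The forward direction — that every braid-induced automorphism satisfies (\ref{eq:artinconj1}) and (\ref{eq:artinconj2}) — is a verification on generators followed by closure under composition. The reverse direction is the substantive part and will be handled by induction on the total conjugating length.

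For the forward direction, I would start by checking the standard Artin generator $\sigma_i$: it acts on $F_n$ by $\sigma_i(x_i) = x_i x_{i+1} \conjug{x_i}$, $\sigma_i(x_{i+1}) = x_i$, and $\sigma_i(x_j) = x_j$ otherwise, which manifestly has the shape (\ref{eq:artinconj1}) with permutation $(i,i+1)$. Condition (\ref{eq:artinconj2}) follows from the direct computation
\[
\sigma_i(x_1 \cdots x_n) = x_1 \cdots x_{i-1} (x_i x_{i+1} \conjug{x_i}) x_i x_{i+2} \cdots x_n = x_1 \cdots x_n.
\]
Both conditions are stable under composition in $\Aut F_n$: if $\phi(x_i) = w_i x_{\pi(i)} \conjug{w_i}$ and $\psi(x_i) = u_i x_{\mu(i)} \conjug{u_i}$, then
\[
(\phi \circ \psi)(x_i) = \phi(u_i)\, w_{\mu(i)}\, x_{\pi \mu(i)}\, \conjug{w_{\mu(i)}}\, \conjug{\phi(u_i)},
\]
which is again of the required shape with permutation $\pi\mu$, while preservation of $x_1 \cdots x_n$ is obviously stable. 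Hence every element of $B_n$ satisfies (\ref{eq:artinconj1}) and (\ref{eq:artinconj2}).

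For the reverse direction, I would first normalize the $w_i$: since $w_i$ is defined only up to right multiplication by a power of $x_{\pi(i)}$ (as the centralizer of $x_{\pi(i)}$ in $F_n$ is $\langle x_{\pi(i)}\rangle$), we may assume each $w_i$ is reduced and does not end in $x_{\pi(i)}^{\pm 1}$, so that $\phi(x_i) = w_i x_{\pi(i)} \conjug{w_i}$ is itself a reduced word. Set $L(\phi) := \sum_{i=1}^n |w_i|$ and proceed by induction on $L(\phi)$. When $L(\phi) = 0$, each $\phi(x_i) = x_{\pi(i)}$, and condition (\ref{eq:artinconj2}) becomes the equation $x_{\pi(1)} \cdots x_{\pi(n)} = x_1 \cdots x_n$ in the free group $F_n$; freeness forces $\pi = \id$ and thus $\phi = \id$, the image of the trivial braid. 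For $L(\phi) > 0$, the identity
\[
w_1 x_{\pi(1)} \conjug{w_1}\, w_2 x_{\pi(2)} \conjug{w_2}\, \cdots\, w_n x_{\pi(n)} \conjug{w_n} = x_1 x_2 \cdots x_n
\]
has left-hand side of length $n + 2L(\phi)$ before reduction and length $n$ afterwards, so the $2L(\phi)$ deletions must occur at the junctions $\conjug{w_i} w_{i+1}$. Analyzing these junctions, I would identify a generator $\sigma_j^{\pm 1}$ such that $\sigma_j^{\pm 1} \circ \phi$ still satisfies (\ref{eq:artinconj1})--(\ref{eq:artinconj2}) but has $L(\sigma_j^{\pm 1} \circ \phi) < L(\phi)$; the inductive hypothesis then yields a braid $\beta$ with representation equal to $\sigma_j^{\pm 1} \circ \phi$, whence $\phi$ is induced by $\sigma_j^{\mp 1} \beta$.

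The main obstacle is this inductive step: pinpointing the correct generator and verifying that composition with it strictly reduces the total length. The key combinatorial input is that the extreme cancellation forced by (\ref{eq:artinconj2}), combined with freeness of $F_n$, determines the initial and terminal letters of certain $w_i$ up to a small number of possibilities, each matched by exactly one generator $\sigma_j^{\pm 1}$. One must also verify that the new conjugating words $w_i'$ arising from $\sigma_j^{\pm 1} \circ \phi$ remain in normalized reduced form, so that the length invariant $L$ is unambiguously defined and genuinely decreases.
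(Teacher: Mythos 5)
Your forward direction is essentially fine, with one small omission: checking the generators $\sigma_i$ and closure under composition only shows the conditions hold on the monoid generated by the $\sigma_i$, so you should also check $\sigma_i^{-1}$ (a one-line computation: $\sigma_i^{-1}(x_i)=x_{i+1}$, $\sigma_i^{-1}(x_{i+1})=\conjug{x_{i+1}}\,x_i\,x_{i+1}$) or argue that the set of automorphisms satisfying (\ref{eq:artinconj1}) and (\ref{eq:artinconj2}) is a subgroup of $\Aut F_n$. Note that the paper itself does not prove this statement --- it quotes Artin's theorem as a classical result --- so the comparison here is with the standard proofs in the literature, which your outline is modelled on.

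The genuine gap is in the reverse direction: the inductive step, which is the entire substance of the theorem, is not carried out --- you yourself flag it as ``the main obstacle'' and only describe what one would hope to prove. Worse, the one concrete combinatorial claim you do make is false: the $2L(\phi)$ cancellations in $w_1 x_{\pi(1)} \conjug{w_1} \cdots w_n x_{\pi(n)} \conjug{w_n} = x_1\cdots x_n$ are \emph{not} confined to the junctions $\conjug{w_i}\,w_{i+1}$. Already for $\phi=\sigma_1$ in $F_2$ one has $\phi(x_1)\phi(x_2)= x_1 x_2 \conjug{x_1}\cdot x_1$, where $w_2$ is empty and the letter $\conjug{x_1}$ cancels against the \emph{central} letter $x_{\pi(2)}=x_1$; in general cancellation propagates through the central letters $x_{\pi(i)}$, and the surviving occurrences of $x_1,\ldots,x_n$ on the left-hand side need not be the central ones. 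Consequently the case analysis that locates a generator $\sigma_j^{\pm 1}$ with $L(\sigma_j^{\pm 1}\circ\phi)<L(\phi)$ must track cancellation across the conjugated blocks $w_i x_{\pi(i)}\conjug{w_i}$ as units, not merely at the $\conjug{w_i}w_{i+1}$ seams; this delicate cancellation lemma is exactly the hard core of the classical combinatorial proof (see Birman, \emph{Braids, Links, and Mapping Class Groups}, Theorem 1.9) and is missing here. An alternative route avoiding it entirely is topological: an automorphism satisfying (\ref{eq:artinconj1}) and (\ref{eq:artinconj2}) is induced by a homeomorphism of the $n$-punctured disk fixing the boundary pointwise, and such mapping classes form exactly $B_n$. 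As written, your induction does not close, so the reverse inclusion remains unproved.
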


Interestingly, if we do not require condition (\ref{eq:artinconj2}), we recover 
exactly automorphisms of $\Aut F_n$ induced by welded braids. Recall 
that the welded braid group is generated by two types of moves $\sigma_i, \rho_i$, which induced 
two kinds of automorphisms also denoted $\sigma_i, \rho_i$ and described in section \ref{sec:action}.

\begin{theorem}[Theorem 4.1 of \cite{FRR}]
The automorphisms of $\Aut F_n$ induced  by the action of $WB_n$ on $F_n$ are exactly  those verifying (\ref{eq:artinconj1}).
\end{theorem}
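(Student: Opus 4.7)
The plan is to prove the two inclusions separately. For the forward direction, I would check directly that each generator $\sigma_i, \rho_i$ of $WB_n$ sends each basis element $x_k$ to a conjugate of some $x_{\pi(k)}$ (with conjugator taken from $\{1, x_i, x_{i+1}\}$), and that the set $\mathcal{P}_n$ of automorphisms of $F_n$ satisfying (\ref{eq:artinconj1}) is in fact a subgroup of $\Aut F_n$. The latter is a short computation: if $\phi(x_i) = w_i x_{\pi(i)} \conjug{w_i}$ and $\phi'(x_i) = w_i' x_{\pi'(i)} \conjug{w_i'}$, then $\phi \circ \phi'(x_i)$ is a conjugate of $x_{\pi \pi'(i)}$ with conjugator $\phi(w_i') \cdot w_{\pi'(i)}$, hence still of the required form. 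This shows the image of $WB_n$ in $\Aut F_n$ sits inside $\mathcal{P}_n$.

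For the reverse inclusion, take $\phi \in \mathcal{P}_n$ with $\phi(x_i) = w_i x_{\pi(i)} \conjug{w_i}$. The subgroup of $WB_n$ generated by the $\rho_i$'s surjects onto the symmetric group $\mathcal{S}_n$ acting by permutation of the $x_i$'s, so by pre-composing $\phi$ with a suitable product of $\rho_i$'s, I may reduce to the case $\pi = \id$, where $\phi$ is what is usually called a \emph{pure symmetric automorphism}. The central ingredient at this stage is McCool's theorem, which asserts that the pure symmetric automorphism group $P\Sigma_n$ is generated by the elements $\alpha_{ij}$ ($i \neq j$) defined by $\alpha_{ij}(x_j) = x_i x_j \conjug{x_i}$ and $\alpha_{ij}(x_k) = x_k$ for $k \neq j$. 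It then suffices to exhibit each $\alpha_{ij}$ as an element of the welded image: a direct computation shows $\alpha_{i, i+1} = \sigma_i \rho_i$ (and $\alpha_{i+1, i}$ is obtained symmetrically), while non-adjacent $\alpha_{ij}$ are produced by conjugating the adjacent ones by a product of $\rho_k$'s realizing the permutation $(i,i+1) \mapsto (i,j)$.

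The main obstacle is thus McCool's generation theorem itself. The standard route proceeds by induction on the complexity measure $C(\phi) = \sum_i |w_i|$, where $|w_i|$ denotes the cyclically reduced length of the conjugator; when $C(\phi) > 0$, the key lemma is that one can always find some $\alpha_{ij}^{\pm 1}$ such that $C(\alpha_{ij}^{\pm 1} \circ \phi) < C(\phi)$, by analyzing the first or last letter of a $w_i$ of maximal length and exploiting the strong cancellation patterns forced on the $w_i$'s by the condition that $\phi$ be an automorphism (equivalently, that the images generate $F_n$). Since this characterization theorem is classical and the argument is delicate but routine, in the plan I would simply invoke the McCool result rather than reproduce this case analysis.
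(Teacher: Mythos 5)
Your outline is correct, but note that the paper itself contains no proof of this statement: it is quoted verbatim from Fenn--Rim\'anyi--Rourke \cite{FRR}, so the comparison is really with their Theorem 4.1. Your route differs from theirs in where the combinatorial work is placed. FRR prove the characterization by a single length-reduction induction carried out directly on the full ``permutation-conjugacy'' group: given $\phi(x_i)=w_i x_{\pi(i)}\conjug{w_i}$, they shorten $\sum_i |w_i|$ by multiplying with generators, an Artin-style peak-reduction handling the permutation and the conjugators simultaneously. You instead first kill the permutation $\pi$ using the $\rho_i$'s and then invoke McCool's theorem that the basis-conjugating (pure symmetric) automorphism group $P\Sigma_n$ is generated by the $\alpha_{ij}$'s; your realizations $\alpha_{i,i+1}=\sigma_i\rho_i$ (up to the composition convention, which may swap it with $\rho_i\sigma_i=\alpha_{i+1,i}$) and the conjugation by permutation automorphisms to reach non-adjacent $\alpha_{ij}$ are correct, as is your closure computation for the set $\mathcal{P}_n$ (you should also record closure under inverses, e.g.\ via $\phi^{-1}(x_i)=\phi^{-1}(\conjug{w_{\pi^{-1}(i)}})\,x_{\pi^{-1}(i)}\,\phi^{-1}(w_{\pi^{-1}(i)})$, or simply observe that $\sigma_i^{-1}$ and $\rho_i^{-1}$ are again of the required form). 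What your approach buys is brevity and modularity: the delicate cancellation analysis is outsourced to a classical, independently proved result (McCool's argument rests on Whitehead/peak reduction, so there is no circularity). What FRR's approach buys is self-containment, and their induction is also what yields, with little extra work, the presentation of $WB_n$ --- which your reduction-to-McCool route would recover only by additionally tracking relations. As a proof of the set-theoretic characterization asserted here, your proposal is sound.
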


 As a straightforward consequence we have
 that the natural map $B_n \to WB_n$
sending $\sigma_i$ into $\sigma_i$ is injective. We will show in  section \ref{ssec:linearnecklace} a geometric interpretation of
such an embedding.

Now we relax condition (\ref{eq:artinconj2}) and characterize automorphisms induced by our configurations.
\begin{theorem}
\label{th:circularartin}
The automorphisms induced by the action  of
$\pi_1(\Conf \mathcal{L}_n)$ on $F_n$ are exactly 
the automorphisms $\phi$ of $\Aut F_n$ that verify the two conditions below:
\begin{equation}
\label{eq:conj1}
\phi(x_i)=w_i x_{\pi(i)} \conjug{w_i}  
\end{equation}
for some $w_1,\ldots,w_n \in F_n$ and some permutation $\pi \in \mathcal{S}_n$, and:
\begin{equation}
\label{eq:conj2}
\phi(x_1x_2\cdots x_n) = w  x_1x_2\cdots x_n \conjug{w} 
\end{equation}
for some $w \in F_n$.
\end{theorem}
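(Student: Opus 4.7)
The plan is to prove the two inclusions separately. \textbf{Easy direction.} It suffices to verify \eqref{eq:conj1} and \eqref{eq:conj2} on the generators $\sigma_1, \ldots, \sigma_n, \tau$ of $\pi_1(\Conf \mathcal{L}_n)$ listed in Remark~\ref{rk:circular}, since the automorphisms of $F_n$ satisfying \eqref{eq:conj1} form a subgroup (Theorem~4.1 of \cite{FRR}) and the automorphisms satisfying \eqref{eq:conj2} also form a subgroup, as the property ``$\phi(x_1 \cdots x_n)$ is conjugate to $x_1 \cdots x_n$'' is easily seen to be stable under composition and inverses. Condition \eqref{eq:conj1} is immediate from the explicit formulas in Section~\ref{sec:action}; for \eqref{eq:conj2}, short computations yield $\sigma_i(x_1 \cdots x_n) = x_1 \cdots x_n$ for $i < n$, whereas $\sigma_n(x_1 \cdots x_n) = (x_n \conjug{x_1})(x_1 \cdots x_n)(x_1 \conjug{x_n})$ and $\tau(x_1 \cdots x_n) = \conjug{x_1}(x_1 \cdots x_n) x_1$.

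\textbf{Hard direction.} Let $\phi$ satisfy \eqref{eq:conj1} and \eqref{eq:conj2}, and pick $w \in F_n$ with $\phi(x_1 \cdots x_n) = w(x_1 \cdots x_n) \conjug{w}$. I would first ``untwist'' $\phi$ by setting $\phi' = c_w^{-1} \circ \phi$, where $c_w$ denotes conjugation by $w$. A direct check shows $\phi'$ still has the conjugating-permutation form of \eqref{eq:conj1} (with new conjugators $w_i' = \conjug{w} w_i$) and now fixes $x_1 \cdots x_n$. Artin's theorem thus applies, so $\phi'$ is induced by some element of the classical braid group $B_n$. Via the natural inclusion $B_n \hookrightarrow CB_n$ as the subgroup $\langle \sigma_1, \ldots, \sigma_{n-1} \rangle$ combined with Theorem~\ref{th:circular}, this places $\phi'$ inside the image $G$ of $\pi_1(\Conf \mathcal{L}_n)$ in $\Aut F_n$.

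It remains to show $c_w \in G$ for every $w \in F_n$. Since $w \mapsto c_w$ is a homomorphism $F_n \to \Aut F_n$, this reduces to $c_{x_i} \in G$ for each generator. The decisive identity is
\[
\sigma_1 \sigma_2 \cdots \sigma_{n-1} \;=\; c_{x_1} \circ \tau \quad \text{in } \Aut F_n,
\]
which one verifies by evaluating both sides on each $x_k$: both send $x_k \mapsto x_1 x_{k+1} \conjug{x_1}$ for $1 \le k \le n-1$ and $x_n \mapsto x_1$. Hence $c_{x_1} = (\sigma_1 \cdots \sigma_{n-1}) \tau^{-1} \in G$ and $c_{x_i} = \tau^{i-1} c_{x_1} \tau^{1-i} \in G$ for all $i$. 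Combining, $\phi = c_w \circ \phi' \in G$, which completes the proof. The main obstacle is precisely this last step: the additional generator $\tau$ supplies exactly the inner automorphisms of $F_n$ needed to bridge Artin's rigid condition \eqref{eq:artinconj2} and the relaxed condition \eqref{eq:conj2} considered here.
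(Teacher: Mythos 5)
Your proof is correct and follows essentially the same route as the paper: the same generator computations on $\Delta = x_1\cdots x_n$ for the easy inclusion, and for the converse the same untwisting of $\phi$ by an inner automorphism followed by an appeal to Artin's theorem, with your key identity $c_{x_1} = (\sigma_1\cdots\sigma_{n-1})\circ\tau^{-1}$ being exactly the paper's first step $g_{x_1} = \sigma_1\circ\cdots\circ\sigma_{n-1}\circ\conjug{\tau}$. The only cosmetic difference is that you obtain $c_w$ for general $w$ via the homomorphism $w\mapsto c_w$, where the paper runs an induction on the length of $w$.
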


\begin{proof}
Let  us start with some notations.

We will denote by $\mathcal{A}_n$ the set of automorphisms of $\Aut F_n$ induced
by $\pi_1(\Conf \mathcal{L}_n)$ and $\mathcal{B}_n$ the set of automorphisms of $\Aut F_n$
that verify conditions (\ref{eq:conj1}) and (\ref{eq:conj2}); we will prove that $\mathcal{A}_n=\mathcal{B}_n$.

 We set also  $\Delta = x_1x_2\cdots x_n$, and, for any $w\in F_n$, we set the automorphism
$g_w \in \Aut F_n$  to be  $g_w(x_i) = w x_i \conjug{w}$ and therefore for any  $w'\in F_n$, we have 
that $g_w(w')= w w' \conjug{w}$.

First of all, the action of $\pi_1(\Conf \mathcal{L}_n)$ on $\Aut F_n$
is generated by the automorphisms $\sigma_i$ ($i=1,\ldots,n$) and $\tau$
that verify equations (\ref{eq:conj1}) and (\ref{eq:conj2}). 
In fact for $i=1,\ldots,n-1$, $\sigma_i(\Delta)=\Delta$ ; 
$\sigma_n(\Delta) =x_n \conjug{x_1} \Delta x_1 \conjug{x_n}$, $\tau(\Delta)=\conjug{x_1} \Delta x_1$.
It proves that $\mathcal{A}_n \subset \mathcal{B}_n$.

The remaining part is to prove that $\mathcal{B}_n \subset \mathcal{A}_n$: given an automorphism $f \in \Aut F_n$ that verifies
conditions (\ref{eq:conj1}) and (\ref{eq:conj2}), we express it as the automorphism induced by
some element of $\pi_1(\Conf \mathcal{L}_n)$.

A simple verification proves the following equation: the automorphism $g_{x_1}$ 
defined by $x_i \mapsto x_1 x_i \conjug{x_1}$ is generated by
elements of $\mathcal{A}_n$.
$$g_{x_1} = \sigma_1 \circ \sigma_2 \circ \cdots \circ \sigma_{n-1} \circ \conjug{\tau}$$

Previous remark allows to us to prove that 
$g_{x_k} \in \mathcal{A}_n$ since we have that:
$$g_{x_k} = \underbrace{\tau\circ \tau \circ \cdots \tau}_{k-1 \text{ occurrences}} \circ g_{x_1} \circ 
\underbrace{\conjug{\tau}\circ\conjug{\tau} \circ \cdots \conjug{\tau}}_{k-1 \text{ occurrences}}$$
We also generate $g_{x_k^{-1}}$ as the inverse of $g_{x_k}$.

Now we have to prove that a general element belongs to $\mathcal{A}_n$. More precisely, let $w \in F_n$; we will
generate the automorphism $g_w$ by induction on the length of $w$.
Suppose that $w = x_k w'$ with $w' \in F_n$ of length strictly less than the length of $w$.
Suppose that $g_{w'} \in  \mathcal{A}_n$. Then
$$g_w = g_{x_k} \circ g_{w'} \in \mathcal{A}_n.$$

The next step of the proof is to simplify the action on $\Delta$.
Let $f\in \Aut F_n$. Suppose that $f$ verifies conditions (\ref{eq:conj1}) and (\ref{eq:conj2}).
In particular, let $w \in F_n$ such that $f(\Delta)= w \Delta \conjug{w}$.
Then $g_{\conjug{w}} \circ f$ still satisfies conditions of type (\ref{eq:conj1})
and the condition (\ref{eq:artinconj2}):  $g_{\conjug{w}} \circ f(\Delta)=\Delta$.

Since  $g_{\conjug{w}} \circ f$ verifies condition (\ref{eq:artinconj1}) (which is exactly condition (\ref{eq:conj1})) and condition (\ref{eq:artinconj2}),
by Artin's theorem $g_{\conjug{w}} \circ f \in \mathcal{A}_n$ and therefore also  $f \in \mathcal{A}_n$.
It ends the proof of $\mathcal{B}_n \subset \mathcal{A}_n$, so that we get 
$\mathcal{A}_n=\mathcal{B}_n$.

\end{proof}

\section{Zero angular sum}
\label{sec:zero}

We say that $\Gamma \in \pi_1(\Conf \mathcal{L}_n)$
has \defi{zero angular sum} if $\Gamma \in \langle \sigma_1,\ldots,\sigma_n\rangle$.
This definition is motivated by the fact that a move $\sigma_i$
shifts the component $K_i$ by an angle of --say-- $+\frac{2\pi}{n}$
while $K_{i+1}$ is shifted by $-\frac{2\pi}{n}$, the sum of these angles being zero.
On the other hand $\tau$, moves each $K_i$ by an angle of --say-- $+\frac{2\pi}{n}$,
with a sum of $2\pi$.
The aim of this section is to characterize the zero angular sum condition 
at the level of automorphisms. We will define a kind of total winding number 
$\epsilon(\Gamma)$ about the axis of rotation of $K_0$.

Let $\epsilon : \pi_1(\Conf \mathcal{L}_n) \to \Zz$
defined as follows: to $\Gamma \in \pi_1(\Conf \mathcal{L}_n)$,
we associate the automorphism $\overline{\phi} = D_{\mathcal{L}_n}(\Gamma)$
by the Dahm morphism. Since the action on the generator $y$ of $\pi_1(\Rr^3\setminus \mathcal{L}_n)$ is the identity
(see remark before Lemma \ref{lem:dahm}),  $\overline{\phi}$
induces an automorphism $\phi$ of $\Aut F_n$ (obtained by setting $y=1$).
By theorem \ref{th:circularartin}, $\phi(x_1x_2\cdots x_n) = w  x_1x_2\cdots x_n \conjug{w}$,
for some $w \in F_n$. We define $\epsilon(\Gamma) = \ell(w) \in \Zz$
to be the algebraic length of the word $w$. 
We have the following characterization of zero angular sum:
\begin{proposition}
\label{prop:angmom}
$\Gamma \in \langle \sigma_1,\ldots,\sigma_n\rangle$
if and only if $\epsilon(\Gamma) = 0$.
\end{proposition}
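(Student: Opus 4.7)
Plan. The approach is to show that $\epsilon$ factors through a group homomorphism from $\pi_1(\Conf \mathcal{L}_n)$ to a cyclic group, with $\epsilon(\sigma_i) = 0$ for every $i$ and $\epsilon(\tau) = 1$, and then exploit the conjugation relation from the presentation of $\pi_1(\Conf \mathcal{L}_n)$ to put an arbitrary $\Gamma$ into a normal form.

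First, I would check that $\epsilon$ is additive. Given $\Gamma_1, \Gamma_2 \in \pi_1(\Conf \mathcal{L}_n)$ with associated Dahm automorphisms satisfying $\phi_i(\Delta) = w_i \Delta \bar{w_i}$ (setting $\Delta = x_1 \cdots x_n$), one computes
\[
(\phi_1 \circ \phi_2)(\Delta) = \phi_1(w_2)\, w_1 \cdot \Delta \cdot \bar{w_1}\, \overline{\phi_1(w_2)},
\]
so the ``$w$'' corresponding to $\Gamma_1 \Gamma_2$ is $\phi_1(w_2)\,w_1$. Because condition \eqref{eq:conj1} of Theorem \ref{th:circularartin} forces $\phi_1$ to send each generator of $F_n$ to a conjugate of a generator, $\phi_1$ acts as a permutation on the abelianization $F_n^{ab} = \Zz^n$. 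In particular the exponent-sum homomorphism $\ell: F_n \to \Zz$ is $\phi_1$-invariant, giving $\ell(\overline{\phi_1(w_2)\,w_1}) = \ell(\bar{w_1}) + \ell(\bar{w_2})$, i.e.\ $\epsilon(\Gamma_1 \Gamma_2) = \epsilon(\Gamma_1) + \epsilon(\Gamma_2)$.

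Next, I would evaluate $\epsilon$ on generators by direct computation. For $1 \le i \le n-1$, one has $\sigma_i(\Delta) = \Delta$, so $\epsilon(\sigma_i) = 0$. For $\sigma_n$ (indices mod $n$), $\sigma_n(\Delta) = x_n \bar{x_1} \cdot \Delta \cdot x_1 \bar{x_n}$, whence $\bar w = x_1 \bar{x_n}$ has algebraic length $1 - 1 = 0$, so $\epsilon(\sigma_n) = 0$. Finally $\tau(\Delta) = \bar{x_1} \Delta x_1$ gives $\bar w = x_1$ and $\epsilon(\tau) = 1$. The forward implication is then immediate: any product of $\sigma_i^{\pm 1}$ has $\epsilon = 0$. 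For the converse, I would use the relation $\bar\tau \sigma_i \tau = \sigma_{i+1}$, equivalently $\sigma_i \tau = \tau \sigma_{i+1}$, from Remark \ref{rk:circular} to commute every occurrence of $\tau^{\pm 1}$ past the $\sigma_j$'s in a word representing $\Gamma$, writing $\Gamma = u\, \tau^k$ with $u \in \langle \sigma_1, \ldots, \sigma_n \rangle$ and $k \in \Zz$. Applying $\epsilon$ gives $\epsilon(\Gamma) = \epsilon(u) + k \epsilon(\tau) = k$ in the target of $\epsilon$, and the hypothesis $\epsilon(\Gamma) = 0$ then forces $k = 0$, so $\Gamma = u \in \langle \sigma_1, \ldots, \sigma_n \rangle$.

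The main delicate point is the correct target of $\epsilon$. Since $w$ is determined only up to the centralizer $\langle \Delta \rangle \le F_n$ of $\Delta$, the quantity $\ell(\bar w)$ is a priori well-defined only modulo $n = \ell(\Delta)$; combined with $\tau^{2n} = 1$ in $\pi_1(\Conf \mathcal{L}_n)$, $\epsilon$ naturally factors through $\Zz/2n\Zz$. One must verify that this target is fine enough so that the concluding $k \equiv 0$ in the final step actually gives $\tau^k = 1$, and not merely $\tau^k \in \{1, \tau^n\}$ — this is needed because $\tau^n$ is non-trivial in $\pi_1(\Conf \mathcal{L}_n)$, having order two in the abelianization $\Zz \oplus \Zz/2n\Zz$, and so it does not belong to $\langle \sigma_1, \ldots, \sigma_n \rangle$.
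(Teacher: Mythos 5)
You have reproduced the paper's main line accurately — the morphism property of $\epsilon$, the generator values $\epsilon(\sigma_i)=0$ and $\epsilon(\tau)=1$, and the normal form $\Gamma=u\,\tau^k$ obtained by pushing $\tau$'s past the $\sigma_j$'s via $\sigma_i\tau=\tau\sigma_{i+1}$ (the paper does exactly this, writing $\Gamma=\tau^k\sigma_{i_1}\cdots\sigma_{i_\ell}$ and concluding $k=0$). But the ``delicate point'' you flag at the end is a genuine gap, you leave it unverified, and in fact it cannot be verified along the route you suggest. The quantity $\epsilon(\Gamma)=\ell(\conjug{w})$ is computed from the automorphism $\phi$ associated to $\Gamma$, and by Lemma \ref{lem:dahm} the element $\tau^n$ lies in $\Ker D_{\mathcal{L}_n}$: it induces the identity ($x_j\mapsto x_{j+n}=x_j$). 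Hence $\epsilon(\tau^n)=\epsilon(1)=0$ (take $w=1$), so $\epsilon$ does \emph{not} factor through $\Zz/2n\Zz$ with $\epsilon(\tau)=1$ as you claim; given the indeterminacy $w\mapsto w\Delta^m$ you correctly identified, it is well defined only as a homomorphism to $\Zz/n\Zz$, and, being an invariant of the induced automorphism, it cannot distinguish $u$ from $u\tau^n$. Your concluding step therefore yields only $k\equiv 0\pmod n$, and the case $\Gamma=u\tau^n$ — which, as you rightly observe from the abelianization $\Zz\oplus\Zz/2n\Zz$, does not lie in $\langle\sigma_1,\ldots,\sigma_n\rangle$ — remains open.

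To close the gap the angular sum must be defined off the automorphism level: take the homomorphism $e:\pi_1(\Conf\mathcal{L}_n)\to\Zz/2n\Zz$ given on the presentation of Remark \ref{rk:circular} by $e(\sigma_i)=0$, $e(\tau)=1$ (every relator, including $\tau^{2n}$, is sent to $0$, so $e$ is well defined), or equivalently use the decomposition $\pi_1(\Conf\mathcal{L}_n)=\tilde{A}_{n-1}\rtimes\langle\tau\rangle$ with $\tau$ of order $2n$; then the normal form $\Gamma=u\tau^k$ with $k$ read off modulo $2n$ gives the proposition. You should know that you have in fact put your finger on a weakness of the paper's own argument: the paper asserts without proof that $\epsilon$ is a $\Zz$-valued morphism with $\epsilon(\tau)=1$ and deduces $k=0$, which is incompatible with $\tau^n\in\Ker D_{\mathcal{L}_n}$ (and with $\tau^{2n}=1$); with the automorphism-based definition of $\epsilon$, the element $\Gamma=\tau^n$ satisfies $\epsilon(\Gamma)=0$ yet is not a product of the $\sigma_i$'s. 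So your instinct about where the danger lies was exactly right — what is missing in your write-up (and glossed over in the paper) is the recognition that the repair requires replacing the Dahm-induced invariant by the presentation-level one.
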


\begin{proof}
 If we denote $\Delta = x_1\cdots x_n$ then for
 $i=1,\ldots,n-1$, $\sigma_i(\Delta)=\Delta$; on the other hand 
$\sigma_n(\Delta) =x_n \conjug{x_1} \Delta x_1 \conjug{x_n}$ and $\tau(\Delta)=\conjug{x_1} \Delta x_1$.
This implies  that $\epsilon(\sigma_i)=0$ and $\epsilon(\tau)=1$ (recall that $\epsilon$ is a homomorphism).

 Any $\Gamma \in  \pi_1(\Conf \mathcal{L}_n)$ can be written as
$\Gamma = \tau^k \sigma_{i_1}\cdots \sigma_{i_\ell}$ (by using the relations 
$\sigma_i \tau = \tau\sigma_{i+1}$).
Hence,  $\epsilon(\Gamma)=0$ implies  $k=0$, in which case
 $\Gamma \in  \langle \sigma_1,\ldots,\sigma_n\rangle$.

\end{proof}

We recall that the \defi{affine braid group}  $\tilde{A}_{n-1}$, is the group obtained by the group presentation of   $B_{n+1}$ 
 by replacing the relation $\sigma_{n} \sigma _1 =\sigma _1\sigma_{n}$ with the relation $\sigma_{n} \sigma _1\sigma_{n} =\sigma _1\sigma_{n} \sigma _1$.
By comparison of group presentations one deduces that   $CB_n = \tilde{A}_{n-1} \rtimes \langle\zeta\rangle$ (see also \cite{CC,KP}).  It then follows from Theorem \ref{th:circular}
and Remark  \ref{rk:circular} that  $\pi_1(\Conf \mathcal{L}_n)= \tilde{A}_{n-1} \rtimes \langle\tau\rangle$.  Since $\tau$ has finite order, $\pi_1(\Conf \mathcal{L}_n)$
inherits some properties of  $\tilde{A}_{n-1}$, in particular:

\begin{corollary}
The group $\pi_1(\Conf \mathcal{L}_n)$ is linear and, provided with the group presentation given in Remark \ref{rk:circular}, has solvable word problem.
\end{corollary}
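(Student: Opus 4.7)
The plan is to exploit the decomposition $\pi_1(\Conf \mathcal{L}_n) = \tilde{A}_{n-1} \rtimes \langle \tau \rangle$ recorded just above the corollary, together with the fact that $\tau$ has finite order dividing $2n$ (in fact equal to $2n$, since $\tau^n\ne 1$ by Lemma \ref{lem:dahm}). Hence $\tilde{A}_{n-1}$ sits as a subgroup of finite index in $\pi_1(\Conf \mathcal{L}_n)$, and both statements will be reduced to known properties of the affine braid group $\tilde{A}_{n-1}$.

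For linearity, I would first invoke the known fact that $\tilde{A}_{n-1}$ admits a faithful finite-dimensional linear representation, a result proved by Digne (with an alternative route via Cohen--Wales). The second ingredient is the classical observation that a group containing a linear group as a finite-index subgroup is itself linear: given a faithful representation $\rho:\tilde{A}_{n-1}\to GL_m(K)$, the induced representation $\mathrm{Ind}_{\tilde{A}_{n-1}}^{\pi_1(\Conf \mathcal{L}_n)}\rho$ provides a faithful linear representation of the whole group, of dimension $2nm$. This settles the first half of the corollary.

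For the word problem I would use the same semidirect product structure to produce an algorithmic normal form. Using the relations $\conjug{\tau}\sigma_i\tau=\sigma_{i+1}$ from the presentation in Remark \ref{rk:circular}, any word $w$ in the generators $\sigma_1,\ldots,\sigma_n,\tau$ can be rewritten effectively as $\tau^{k}\tilde{w}$ with $\tilde{w}$ a word in the $\sigma_i$'s alone and $k\in\Zz$. Because $\pi_1(\Conf \mathcal{L}_n)$ is a semidirect product with $\tau$ of order $2n$, the element represented by $w$ is trivial if and only if $k\equiv 0\pmod{2n}$ and $\tilde{w}$ represents the identity of $\tilde{A}_{n-1}$; the latter is decidable, since the affine braid group has solvable word problem (for instance through its Garside-theoretic structure, or as a consequence of its linearity together with its finite presentation).

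The main obstacle is really only the bookkeeping around citing the appropriate references for the linearity and the decidability of the word problem of $\tilde{A}_{n-1}$; once those two inputs are granted, both assertions follow formally from the finite-index decomposition and the explicit normal form given by the semidirect product, without any further topological input beyond Theorem \ref{th:circular} and Remark \ref{rk:circular}.
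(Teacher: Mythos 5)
Your proposal is correct and matches the paper's own (essentially one-line) proof: the authors deduce the corollary directly from the decomposition $\pi_1(\Conf \mathcal{L}_n)=\tilde{A}_{n-1}\rtimes\langle\tau\rangle$ with $\tau$ of finite order, and your induced-representation argument for linearity and the $\tau^{k}\tilde{w}$ normal form for the word problem are exactly the standard finite-index details they leave implicit. One small caveat that affects nothing: $\tau^{n}\neq 1$ together with $\tau^{2n}=1$ does not by itself force the order of $\tau$ to be $2n$ (for odd $n$ it could a priori be $2$); the order is exactly $2n$ because $\langle\tau\rangle\cong\langle\zeta\rangle/\langle\zeta^{2n}\rangle\cong\Zz/2n\Zz$ in the semidirect decomposition, but your argument only uses finiteness, so the proof stands as written.
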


On the other hand, it follows from Theorem \ref{th:circular} and Proposition \ref{prop:angmom} that:

 \begin{proposition}
 The affine braid group on $n$ strands, $\tilde{A}_{n-1}$ is isomorphic to  the subgroup of  $\pi_1(\Conf \mathcal{L}_n)$
 consisting of elements of zero angular sum.
 \end{proposition}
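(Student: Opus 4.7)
The plan is to combine Proposition \ref{prop:angmom} with the semidirect product decomposition $\pi_1(\Conf \mathcal{L}_n) = \tilde{A}_{n-1} \rtimes \langle\tau\rangle$ just established. Proposition \ref{prop:angmom} identifies the subgroup of elements of zero angular sum with $\langle \sigma_1, \ldots, \sigma_n \rangle \subset \pi_1(\Conf \mathcal{L}_n)$, so the statement reduces to exhibiting an isomorphism between $\tilde{A}_{n-1}$ and this particular subgroup.

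First I would define the natural homomorphism $\iota \colon \tilde{A}_{n-1} \to \pi_1(\Conf\mathcal{L}_n)$ sending each generator $\sigma_i$ of $\tilde{A}_{n-1}$ to the element of $\pi_1(\Conf\mathcal{L}_n)$ of the same name. This map is well-defined because the defining relations of $\tilde{A}_{n-1}$ (the cyclic braid and commutation relations) are exactly the first two families of relations in the presentation of $\pi_1(\Conf\mathcal{L}_n)$ given in Remark \ref{rk:circular}. By construction the image of $\iota$ is $\langle \sigma_1, \ldots, \sigma_n \rangle$, which proves surjectivity onto the zero-angular-sum subgroup.

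The heart of the argument is injectivity of $\iota$. For this I would use the equality $CB_n = \tilde{A}_{n-1} \rtimes \langle\zeta\rangle$, interpreted as an internal semidirect product, so that $\langle\zeta\rangle \cap \tilde{A}_{n-1} = \{1\}$. Passing to the quotient $\pi_1(\Conf\mathcal{L}_n) = CB_n / \langle\zeta^{2n}\rangle$ from Theorem \ref{th:circular} only affects the cyclic factor, because $\zeta^{2n}$ is central in $CB_n$ and lies entirely in $\langle\zeta\rangle$; hence the quotient map is injective on $\tilde{A}_{n-1}$ and yields the decomposition $\tilde{A}_{n-1} \rtimes \langle\tau\rangle$ with $\tau$ of order $2n$. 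Composing the inclusion $\tilde{A}_{n-1} \hookrightarrow CB_n$ with this quotient map recovers $\iota$, so $\iota$ is injective, and its image is $\langle \sigma_1, \ldots, \sigma_n \rangle = \{\Gamma : \epsilon(\Gamma)=0\}$.

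I expect the main obstacle to be the careful verification that the quotient by $\langle\zeta^{2n}\rangle$ does not collapse anything inside the $\tilde{A}_{n-1}$ factor. This amounts to unpacking the internal semidirect product structure of $CB_n$ and checking that $\langle\zeta^{2n}\rangle \cap \tilde{A}_{n-1} = \{1\}$; once that is in hand the isomorphism $\tilde{A}_{n-1} \cong \{\Gamma \in \pi_1(\Conf\mathcal{L}_n) : \epsilon(\Gamma) = 0\}$ assembles immediately from Proposition \ref{prop:angmom}.
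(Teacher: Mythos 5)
Your proposal is correct and follows essentially the same route as the paper, which derives the proposition directly from the internal semidirect product decomposition $CB_n = \tilde{A}_{n-1} \rtimes \langle\zeta\rangle$ passed to the quotient $\pi_1(\Conf\mathcal{L}_n) = CB_n/\langle\zeta^{2n}\rangle = \tilde{A}_{n-1} \rtimes \langle\tau\rangle$, combined with Proposition \ref{prop:angmom}. The only difference is that the paper leaves these steps implicit, whereas you spell out the key verification $\tilde{A}_{n-1} \cap \langle\zeta^{2n}\rangle \subseteq \tilde{A}_{n-1} \cap \langle\zeta\rangle = \{1\}$, which is exactly the point that makes the quotient map injective on the affine factor.
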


Consider now the representation $\rho_{\Aff} :  \tilde{A}_{n-1} \longrightarrow \Aut F_n$ 
induced by the action of $\pi_1(\Conf \mathcal{L}_n)$ on $F_n$ (obtained by setting $y=1$).

 $$ i\not=n:
\rho_{\Aff} (\sigma_{i}) : \left\{
\begin{array}{ll}
x_{i} \longmapsto x_{i} \, x_{i+1} \, \conjug{x_i}, &  \\ 
x_{i+1} \longmapsto x_{i}, & \\ 
x_{j} \longmapsto x_{j}, &  j\neq i,i+1.
\end{array} \right.
$$
$$
\rho_{\Aff} (\sigma_{n}) : 
\left\{
\begin{array}{ll}
x_{1} \longmapsto  x_{n}  &  \\
x_{n} \longmapsto
x_{n} x_{1}  \conjug{x_{n}} , & \\ 
x_{j} \longmapsto x_{j}, &  j\neq 1, n.
\end{array} \right.
$$

\begin{theorem}\label{thm:affine}

\begin{enumerate}[i)]
\item The representation  $\rho_{\Aff} :  \tilde{A}_{n-1} \longrightarrow \Aut F_n$  is faithful. 
\item An element $\phi \in Aut F_n$ belongs to $\rho_{\Aff} (\tilde{A}_{n-1})$ if and only if it verifies the conditions below:
\begin{equation}
\label{eq:conj1bis}
\phi(x_i)=w_i x_{\pi(i)} \conjug{w_i}  
\end{equation}
for some $w_1,\ldots,w_n \in F_n$ and some permutation $\pi \in \mathcal{S}_n$, and:
\begin{equation}
\label{eq:conj2bis}
\phi(x_1x_2\cdots x_n) = w  x_1x_2\cdots x_n \conjug{w} 
\end{equation}
for some $w \in F_n$ with algebraic length $\ell(w)=0$.
\end{enumerate}
 \end{theorem}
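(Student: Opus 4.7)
The plan is to derive both parts from Lemma~\ref{lem:dahm}, Theorem~\ref{th:circularartin}, and Proposition~\ref{prop:angmom}, after observing that $\rho_{\Aff}$ is nothing but the restriction to $\tilde{A}_{n-1}=\langle\sigma_1,\dots,\sigma_n\rangle\subset\pi_1(\Conf\mathcal{L}_n)$ of the homomorphism $\rho:\pi_1(\Conf\mathcal{L}_n)\to\Aut F_n$ obtained from the Dahm morphism $D_{\mathcal{L}_n}$ by setting $y=1$ (i.e.\ the composition $\Psi\circ D_{\mathcal{L}_n}$ of section~\ref{sec:action}). A direct check of the formulas for $\sigma_1,\dots,\sigma_n$ confirms this identification. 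The semidirect-product decomposition $\pi_1(\Conf\mathcal{L}_n)=\tilde{A}_{n-1}\rtimes\langle\tau\rangle$ noted just before the statement is the key structural tool.

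For part (i), Lemma~\ref{lem:dahm} combined with Theorem~\ref{thmCB} and the quotient $\pi_1(\Conf\mathcal{L}_n)=CB_n/\langle\zeta^{2n}\rangle$ gives $\Ker\rho=\langle\tau^n\rangle\cong\Zz/2\Zz$ (indeed $\Ker(\Psi\circ D_{\mathcal{L}_n}\circ h_n)=\langle\zeta^n\rangle$ by Theorem~\ref{thmCB}, and this descends to $\langle\tau^n\rangle$ modulo $\langle\zeta^{2n}\rangle$). The semidirect-product decomposition forces $\tilde{A}_{n-1}\cap\langle\tau\rangle=\{1\}$, and since $\tau^n\neq 1$ in $\langle\tau\rangle\cong\Zz/2n\Zz$, we conclude $\Ker\rho_{\Aff}=\langle\tau^n\rangle\cap\tilde{A}_{n-1}=\{1\}$, so $\rho_{\Aff}$ is faithful.

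For part (ii), the forward implication is straightforward: if $\phi=\rho_{\Aff}(\Gamma)$ with $\Gamma\in\tilde{A}_{n-1}$, Theorem~\ref{th:circularartin} yields (\ref{eq:conj1bis}) and (\ref{eq:conj2bis}), and Proposition~\ref{prop:angmom} forces $\epsilon(\Gamma)=0$, which by the very definition of $\epsilon$ means that $w$ may be chosen with $\ell(w)=0$. Conversely, suppose $\phi\in\Aut F_n$ satisfies (\ref{eq:conj1bis}) and (\ref{eq:conj2bis}) with $\ell(w)=0$. By Theorem~\ref{th:circularartin} we may write $\phi=\rho(\Gamma)$ for some $\Gamma\in\pi_1(\Conf\mathcal{L}_n)$; the condition $\ell(w)=0$ yields $\epsilon(\Gamma)=0$, and Proposition~\ref{prop:angmom} then forces $\Gamma\in\tilde{A}_{n-1}$, whence $\phi\in\rho_{\Aff}(\tilde{A}_{n-1})$.

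The whole argument is essentially an assembly of results proved in previous sections; the only conceptual point requiring care is the kernel computation in (i), namely the verification that $\Ker\rho$ meets $\tilde{A}_{n-1}$ only at the identity. That reduces to the elementary observation that $\tau^n$ lies in the complementary factor $\langle\tau\rangle$ of the semidirect product, and this will be the single place where all the structural results of sections~\ref{sec:necklace}--\ref{sec:zero} must be brought together.
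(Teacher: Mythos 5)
Your proof is correct, and for part (ii) it follows essentially the same route as the paper, which disposes of the characterization in one line by ``combining Theorem \ref{th:circularartin} and Proposition \ref{prop:angmom}'' --- exactly the two implications you spell out (your part (ii) therefore also inherits, exactly as the paper's does, whatever conventions make $\epsilon$ well defined). Where you genuinely diverge is the kernel computation in part (i). Both arguments begin from $\Ker \rho_{\Aff} \subseteq \Ker(\Psi \circ D_{\mathcal{L}_n}) = \langle \tau^n \rangle$, obtained from Theorem \ref{thmCB} and Lemma \ref{lem:dahm}, but they finish differently: the paper notes that $\tau^n$ is central in $\pi_1(\Conf \mathcal{L}_n)$, so that $\Ker \rho_{\Aff}$ is a central subgroup of $\tilde{A}_{n-1}$, and then invokes the external fact that the center of $\tilde{A}_{n-1}$ is trivial (citing \cite{JA}); you instead use the internal decomposition $\pi_1(\Conf \mathcal{L}_n) = \tilde{A}_{n-1} \rtimes \langle \tau \rangle$ and observe that $\tau^n$ is a nontrivial element of the complement $\langle \tau \rangle$, hence cannot lie in the normal factor, so $\langle \tau^n \rangle \cap \tilde{A}_{n-1} = \{1\}$. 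Your route is more elementary in that it avoids the Albar--Johnson center computation entirely, at the cost of two facts you rightly flag: (a) the copy of $\tilde{A}_{n-1}$ in the decomposition must be precisely the zero-angular-sum subgroup $\langle \sigma_1, \ldots, \sigma_n \rangle$ on which $\rho_{\Aff}$ is defined, which is how the paper sets things up via Remark \ref{rk:circular} and the presentation of \cite{KP}; and (b) $\tau^n \neq 1$, which follows either from Lemma \ref{lem:dahm} itself (where $\langle \tau^n \rangle \cong \Zz/2\Zz$) or from $\zeta$ having infinite order in $CB_n = \tilde{A}_{n-1} \rtimes \langle \zeta \rangle$ so that $\tau$ has order exactly $2n$ in the quotient by $\langle \zeta^{2n} \rangle$. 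Both ingredients are established in the paper, so your argument is complete, and it even yields the slightly sharper structural statement $\Ker(\Psi \circ D_{\mathcal{L}_n}) \cap \tilde{A}_{n-1} = \{1\}$ without any appeal to centers.
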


\begin{proof}
The kernel of $\rho_{\Aff} $ is a subgroup of the kernel of $\Psi \circ D_{\mathcal{L}_n} \circ h_n$, which is generated by $\tau^n$ (Lemma \ref{lem:dahm}).
Since $\tau^n$ generates the center of $\pi_1(\Conf \mathcal{L}_n)$, the kernel of  $\rho_{\Aff} $ is a subgroup of the center of  $\tilde{A}_{n-1}$.
Since the center of $\tilde{A}_{n-1} $ is trivial (see for instance \cite{JA}) we can conclude that $\rho_{\Aff}$ is faithful. The characterization given in the second statement follows 
 combining Theorem \ref{th:circularartin} and Proposition \ref{prop:angmom}.
\end{proof}

\section{The linear necklace}
\label{ssec:linearnecklace}

Let $\mathcal{C}^*_n =  K_1 \cup\ldots\cup K_n$ be the link where
each $K_i$ is a Euclidean circle  parallel to the $yz$-plane and centered at the $x$-axis.
We call such a  link a \defi{linear necklace}, thinking of the $x$-axis as $K_0$, a circle
passing through a point at infinity.
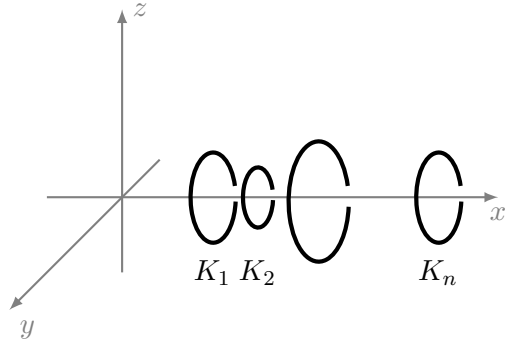
\begin{figure}
 \begin{tikzpicture}[scale=1]

\begin{scope}

\draw[->,>=latex,gray,thick] (-1,0)--(5,0) node[below]{$x$};
\draw[->,>=latex,gray,thick] (0,-1)--(0,2.5) node[right]{$z$};
\draw[->,>=latex,gray,thick] (0.5,0.5)--(-1.5,-1.5) node[below right]{$y$};

\draw[ultra thick] (1.5,0.15) arc (15:355:0.3 and 0.6);
\draw[ultra thick] (2,0.1) arc (15:355:0.2 and 0.4);
\draw[ultra thick] (3,0.15) arc (15:355:0.4 and 0.8);
\draw[ultra thick] (4.5,0.15) arc (15:355:0.3 and 0.6);
 \node at (1.2,-1) {$K_1$}; 
 \node at (1.8,-1) {$K_2$}; 
 \node at (4.2,-1) {$K_n$}; 
\end{scope}

\end{tikzpicture}
 \caption{A linear necklace. \label{fig:linearnecklace}}
\end{figure}

We recall that $\mathcal{UR}_n$ is the configuration space  of $n$ 
disjoint Euclidean circles lying on planes parallel to the $yz$-plane.
We have that:
\begin{theorem}
The inclusion of  $\Conf \mathcal{C}^*_n$ into $\mathcal{UR}_n$ induces an injection at the level 
of fundamental groups.
\end{theorem}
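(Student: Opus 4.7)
The plan is to identify the inclusion-induced map $\pi_1(\Conf \mathcal{C}^*_n) \to \pi_1(\mathcal{UR}_n) = WB_n$ with the natural embedding $B_n \hookrightarrow WB_n$, $\sigma_i \mapsto \sigma_i$, whose injectivity was already recorded in Section~\ref{sec:auto} by comparing the image characterizations of $B_n$ (Artin's theorem, conditions \eqref{eq:artinconj1}--\eqref{eq:artinconj2}) and of $WB_n$ (condition \eqref{eq:artinconj1} only) inside $\Aut F_n$.

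The first step is to compute $\pi_1(\Conf \mathcal{C}^*_n)$. A component of a linear necklace is uniquely determined by a pair $(x,r) \in \Rr \times \Rr_{>0}$ (the abscissa of the center and the radius), and two such components meet if and only if $(x,r)=(x',r')$. Thus $\Conf \mathcal{C}^*_n$ is homeomorphic to the unordered configuration space of $n$ distinct points in the open half-plane $\Rr \times \Rr_{>0}$, a space homeomorphic to $\Rr^2$; consequently $\pi_1(\Conf \mathcal{C}^*_n) \cong B_n$.

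The second step is to show that, under this identification, inclusion sends the standard braid generator $\sigma_i \in B_n$ to the welded generator $\sigma_i \in WB_n$. A loop representing $\sigma_i \in B_n$ exchanges two adjacent points of the half-plane by a half-twist; translated back to rings it gives a motion in which two adjacent circles $K_i, K_{i+1}$ swap positions while remaining centered on the $x$-axis and parallel to the $yz$-plane. Because of this rigid constraint the swap cannot be realized by sliding one circle around the other (which would give $\rho_i$): at some intermediate time the two circles must share a common plane with different radii, one inside the other, so that one ring passes through the other. This is precisely the welded $\sigma_i$. The cleanest confirmation is to post-compose with the Dahm morphism $WB_n \hookrightarrow \Aut F_n$ and check on the Wirtinger meridians of $\Rr^3 \setminus \mathcal{C}^*_n$ that the induced automorphism is the Artin automorphism $x_i \mapsto x_i x_{i+1} \conjug{x_i}$, $x_{i+1} \mapsto x_i$, $x_j \mapsto x_j$ for $j \neq i, i+1$, which rules out any spurious contribution of a $\rho_j$ or a twist $\tau_j$.

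Combining the two steps, the map induced by inclusion is the embedding $B_n \hookrightarrow WB_n$, $\sigma_i \mapsto \sigma_i$, which is injective. The main obstacle is the geometric identification carried out in the second step: one has to ensure that the ring-through-ring realization of $\sigma_i$ corresponds exactly to the welded generator $\sigma_i$ and not, say, to a conjugate $\rho_i \sigma_i \conjug{\rho_i}$ or to a twisted variant. Reading off the Wirtinger action on meridian generators of the link complement is the most direct way to dispose of this ambiguity.
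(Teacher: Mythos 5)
Your proof is correct and follows essentially the same route as the paper: you identify $\Conf \mathcal{C}^*_n$ with the configuration space of $n$ distinct points in a half-plane (the paper uses the intersection with the half-plane $y=0$, $z>0$; your $(x,r)$ parametrization is the same thing), so that $\pi_1(\Conf \mathcal{C}^*_n)\cong B_n$ with the moves $\sigma_i$ as generators, and then invoke the injectivity of $B_n\to WB_n$, $\sigma_i\mapsto\sigma_i$, which rests on the Artin/FRR characterizations in $\Aut F_n$ recorded in Section~4. Your additional Dahm-morphism check that the swap realizes the pass-through generator $\sigma_i$ rather than $\rho_i$ or a conjugate merely makes explicit what the paper takes as evident from its figure.
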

\begin{proof}
Let us remark that the moves $\sigma_1, \ldots, \sigma_{n-1}$ depicted in figure 2 belong to $\pi_1(\Conf \mathcal{C}^*_n) $. Actually, they
generate $\pi_1(\Conf \mathcal{C}^*_n) $; in fact, the position of any circle $K_i$ of $\mathcal{C}^*_n$
is determined by the intersection with the half-plane $y=0$ and $z>0$.
It follows that the configuration space of linear necklaces, $\Conf \mathcal{C}^*_n$, 
can be identified with the configuration space of $n$ distinct points in the (half-)plane, 
so that $\pi_1(\Conf \mathcal{C}^*_n) =B_n$,  where  generators are  exactly moves $\sigma_1, \ldots, \sigma_{n-1}$.
\end{proof}

The previous result provides then a geometric interpretation for the algebraic embedding of 
$B_n$ into $WB_n$ as subgroups of $\Aut(F_n)$.

\subsection*{Pure subgroups.}

Let us denote by $\Conf_{\text{Ord}}\mathcal{C}_n$ the configuration space of $n$ 
  ordered disjoint Euclidean circles lying on planes parallel to the $yz$-plane:
 $\pi_1(\Conf_{\text{Ord}} \mathcal{C}_n)$ is called the \defi{pure welded  braid group} on $n$ strands and will be denoted by $WP_n$,
 while $\pi_1(\Conf_{\text{Ord}} \mathcal{C}^*_n)$ is isomorphic to the pure braid group $P_n$.  Previous results imply that $P_n$ embeds geometrically in 
 $WP_n$.
 
More precisely, the group  $\pi_1(\Conf_{\text{Ord}} \mathcal{C}^*_n)$  is generated 
 by the family of paths $\lambda_{i,j}$ for $1\le i < j \le n$:  $\lambda_{i,j}$ moves 
 only the $i$-th circle that  passes inside  the following ones until the $j-1$-th one, 
  inside-outside the  $j$-th one and that  finally comes back passing inside the other circles.
 
Notice also that  in \cite{BH}, Brendle and Hatcher  introduced the configuration spaces of circles lying on parallel planes of different size, that 
 we can denote by $\mathcal{UR}^<_n$.  We can take as base-point for $\pi_1(\mathcal{UR}^<_n)$  a configuration
  of parallel circles with center on the $z$-axis and such that for any $i=1, \ldots, n-1$
 the $i$-th circle has radius greater than the radius of the $i+1$-th one. Let us remark that all other choices of base point  give conjugated subgroups  in  $WP_n$
 corresponding to different  permutations of circles.   As shown in \cite{BH},  $\pi_1(\mathcal{UR}^<_n)$ is generated by $\delta_{i,j}$ for $1\le i < j \le n$:  $\delta_{i,j}$ moves only the $i$-th circle that 
  passes outside  the $j-1$-th one (without passing inside-outside the other circles) and moves back (without passing inside-outside the other circles). 
  
 Let us recall that $\pi_1(\mathcal{UR}^<_n)$ is  called \defi{upper McCool group}
 in \cite{CPVW} and denoted by $P\Sigma_n^+$:   it is interesting to remark  that $P\Sigma_n^+$ and $P_n$ have isomorphic Lie algebras associated to the lower central series
  and the groups themselves are isomorphic for $n=2,3$ (\cite{CPVW}). A.~Suciu communicated to  us that using ideas from \cite{CoS}  it is possible to show that
  the ranks  of \emph{Chen groups} are different for $n>3$ and therefore $P\Sigma_n^+$ and $P_n$ are not isomorphic for 
 $n>3$.

\section{Proof of Theorem \ref{thmCB}}\label{2.1proof}
We have to proof 
that the kernel of $\rho_{CB}:  CB_n \to \Aut F_n$ is the cyclic group generated by $\zeta^n$.

  Let us recall that when we restrict the map $D_{\mathcal{L}'_n}:R_n \to \Aut(\pi_1(\Rr^3 \setminus \mathcal{L}'_n,*))$ to the braid group $B_n$
we get the usual Artin representation $\rho_A : B_n \to \Aut F_n$:
$$\rho_A(\sigma_i) : 
\left\{\begin{array}{l}
x_i \mapsto x_{i} x_{i+1} \conjug{x_{i}} \\
x_{i+1} \mapsto x_i \\
x_{j} \mapsto x_j \quad j \neq i,i+1\\    
\end{array}\right.
$$
for  the usual generators $\sigma_1, \ldots, \sigma_{n-1}$  of $B_n$.

Therefore we can define the group   $B_n \ltimes_{\rho_A} F_n$:
as generators  we will  denoted by $\alpha_1, \ldots, \alpha_{n-1}$ the generators of the factor
$B_n$ and  by $\eta_1,\ldots \eta_n$ a set  of generators for $F_n$.
According to such a set of generators a possible complete set of relations is the following:
$$
\begin{array}{l}
\alpha_i\alpha_{i+1}\alpha_i = \alpha_{i+1}\alpha_i\alpha_{i+1} \quad \text{ for } i = 1,2\ldots, n-1, \\
\alpha_i\alpha_j=\alpha_j\alpha_i \quad \text{ for } |i-j| \neq 1, \\
\alpha_i^{-1} \eta_i \alpha_i =\eta_{i}\eta_{i+1}\eta_{i}^{-1} \quad \text{ for } i = 1,2\ldots, n-1 \\
\alpha_i^{-1} \eta_{i+1} \alpha_i =\eta_{i} \quad \text{ for } i = 1,2\ldots, n-1 \\
\alpha_i^{-1} \eta_k \alpha_i =\eta_{k}  \quad \text{ for } i = 1,2\ldots, n-1 \quad \text{ and }  k \not= i, i+1
\end{array}
$$

 The group $CB_n$ is isomorphic to $B_n \ltimes_{\rho_A} F_n$ (\cite{CrP});
we leave to the reader the verification that an isomorphism is given by the map
$\Theta_n : CB_n \to B_n \ltimes_{\rho_A} F_n$  defined as follows:
$\Theta_n(\zeta)=\sigma_{n-1} \cdots \sigma_1 \eta_1$, $\Theta_n(\sigma_j)=\alpha_j$ for $j=1, \ldots, n-1$
and $\Theta_n(\sigma_n)= \eta_1^{-1} \sigma_1^{-1} \cdots \sigma_{n-2}^{-1}  \sigma_{n-1} \sigma_{n-2} \cdots \sigma_1 \eta_1$.

Using the action by conjugation of $F_n$ on itself we get 
a representation $\chi_n: B_n \ltimes_{\rho_A} F_n \to \Aut F_n$. More precisely,
$\chi_n(\alpha_j)=\rho_A(\sigma_j)$ and $\chi_n(\eta_i)(x_k)= x_i^{-1} x_k x_i$
for any $j=1,\ldots, n-1$ and $i,k=1,\ldots, n$.

 One can easily verify on the images of generators that the composed homomorphism $\chi_n \circ \Theta_n: CB_n  \to \Aut F_n$
coincides with $\rho_{CB} : CB_n  \to \Aut F_n$ defined in Section \ref{sec:necklace}.   We claim  that the kernel of $\chi_n$ 
is generated by $\Theta_n(\zeta^n)$.

Let $w \in \Ker \chi_n$ and write $w$ in the form $w=\alpha  \, \eta$, where $\alpha$ is written in the generators $\alpha_i$'s  and $\eta$ in the generators
$\eta_j$'s; since $\chi_n(w)(x_j)=x_j$ for all generators $x_1, \ldots, x_n$ of $F_n$,
we have that $\chi_n(\alpha)(x_j)=\eta^{-1}(x_j)$, where  $\chi_n(\alpha)(x_j)= \rho_A(\alpha)(x_j)$,
identifying any usual braid generator $\sigma_i$ with the corresponding $\alpha_i$.

It follows that $\rho_A(\alpha)$ is an inner automorphism, therefore $\alpha$ belongs
to the center of $B_n$ (see for instance \cite{BB}, Remark 1): more precisely
$\alpha=((\alpha_{n-1} \cdots \alpha_1)^n)^m$ for some  $m \in \Zz$ and 
$\rho_A(\alpha)(x_j)=(x_1 \cdots x_n)^m x_j (x_1 \cdots x_n)^{-m}$. Then we can deduce 
that $\eta=(\eta_1 \cdots \eta_n)^m$ and $w= ((\alpha_{n-1} \cdots \alpha_1)^n)^m (\eta_1 \cdots \eta_n)^m$.
Using the defining relations
of $B_n \ltimes_{\rho_A} F_n$ we obtain that 
$$w=(((\alpha_{n-1} \cdots \alpha_1) \eta_1)^n)^m=\Theta(\zeta^n)^m \; .$$
Then  $\Ker \chi_n$ 
is generated by $\Theta_n(\zeta^n)$: since $\Theta_n$ is an isomorphism,  
it follows then that the kernel of $\rho_{CB}$ is generated by $\zeta^n$.

\section*{Acknowledgments}

The authors thank the referees for theirs comments and Juan Gonz\'alez-Meneses for useful
discussions on representations of affine braid groups in terms of automorphisms. 
The authors are deeply indebted to Martin Palmer for fruitful conversations on configurations spaces.
The research of the first author was partially supported by the French grant ANR-11-JS01-002-01 "VASKHO".
The research of the second  author was partially supported by the French grant ANR-12-JS01-002-01 "SUSI".


\end{document}